\setlist{  
  listparindent=\parindent,
  parsep=0pt,
}
\theoremstyle{plain}
\newtheorem{thm}{Theorem}[section]
\newtheorem{prop}[thm]{Proposition}
\theoremstyle{definition}
\newtheorem{remark}[thm]{Remark}
\Crefname{thm}{Theorem}{Theorems}
\Crefname{prop}{Proposition}{Propositions}
\numberwithin{equation}{section} 
\DeclarePairedDelimiter{\paren}{\lparen}{\rparen}
\DeclarePairedDelimiter{\jp}{\langle}{\rangle}
\DeclareMathOperator{\supp}{supp}
\newcommand{\M}{{\mathcal{M}}}
\newcommand{\p}{{\partial}}
\renewcommand{\d}{\delta}
\newcommand{\R}{{\mathbb{R}}}
\newcommand{\N}{{\mathbb{N}}}
\newcommand{\Z}{{\mathbb{Z}}}
\newcommand{\Ss}{{\mathbb{S}}}
\renewcommand{\H}{{\mathcal{H}}}
\renewcommand{\P}{{\mathcal{P}}}
\newcommand{\T}{{\mathbb{T}}}
\newcommand{\G}{{\mathfrak{G}}}
\newcommand{\Fr}{F}
\renewcommand{\L}{{\mathcal{L}}}
\renewcommand{\M}{{\mathcal{M}}}
\newcommand{\Dc}{\mathcal{D}}
\newcommand{\tl}{\tilde}
\newcommand{\D}{\Delta}
\newcommand{\Uu}{\mathfrak{U}}
\newcommand{\nn}{\nonumber}
\newcommand{\ol}{\overline}
\newcommand{\ul}{\underline}
\newcommand{\ux}{\underline{x}}
\newcommand{\ep}{\epsilon}
\newcommand{\vep}{\varepsilon}
\newcommand{\al}{\alpha}
\newcommand{\be}{\beta}
\newcommand{\Ib}{\mathbb{I}}
\renewcommand{\G}{\mathsf{G}}
\renewcommand{\Fr}{\mathsf{F}}
\newcommand{\indic}{\mathbf{1}}
\newcommand{\la}{\lambda}
\let\div\relax
\DeclareMathOperator{\div}{div}
\let\oldtocsection=\tocsection
\let\oldtocsubsection=\tocsubsection
\let\oldtocsubsubsection=\tocsubsubsection
\renewcommand{\tocsection}[2]{\hspace{0em}\oldtocsection{#1}{#2}}
\renewcommand{\tocsubsection}[2]{\hspace{1em}\oldtocsubsection{#1}{#2}}
\renewcommand{\tocsubsubsection}[2]{\hspace{2em}\oldtocsubsubsection{#1}{#2}}
\begin{document}

\title[From Quantum Many-Body Systems to Ideal Fluids]{From Quantum Many-Body Systems to Ideal Fluids}

\author[M. Rosenzweig]{Matthew Rosenzweig}
\email{mrosenzw@mit.edu}
\thanks{M.R. is supported by the Simons Foundations through the Simons Collaboration on Wave Turbulence and by the NSF through grant DMS-2052651.}

\begin{abstract}
We give a rigorous, quantitative derivation of the \emph{incompressible Euler equation} from the many-body problem for $N$ bosons on $\T^d$ with binary Coulomb interactions in the semiclassical regime. The coupling constant of the repulsive interaction potential is $~1/(\vep^2 N)$, where $\vep \ll 1$ and $N\gg 1$, so that by choosing $\vep=N^{-\lambda}$, for appropriate $\lambda>0$, the scaling is \emph{supercritical} with respect to the usual mean-field regime. For approximately monokinetic initial states with nearly uniform density, we show that the density of the first marginal converges to 1 as $N\rightarrow\infty$ and $\hbar\rightarrow 0$, while the current of the first marginal converges to a solution $u$ of the incompressible Euler equation on an interval for which the equation admits a classical solution. In dimension 2, the dependence of $\vep$ on $N$ is essentially optimal, while in dimension 3, heuristic considerations suggest our scaling is optimal. To the best of our knowledge, our result is a new connection between quantum many-body systems and ideal hydrodynamics, complementing the previously known connection to compressible fluids. Our proof is based on a Gronwall relation for a \emph{quantum modulated energy} with an appropriate corrector and is inspired by recent work of Golse and Paul \cite{GP2021} on the derivation of the pressureless Euler-Poisson equation in the classical and mean-field limits and of Han-Kwan and Iacobelli \cite{HkI2021} and the author \cite{Rosenzweig2021ne} on the derivation of the incompressible Euler equation from Newton's second law in the supercritical mean-field limit. As a byproduct of our analysis, we also derive the incompressible Euler equation from the Schr\"odinger-Poisson equation in the limit as $\hbar+\vep\rightarrow 0$, corresponding to a combined classical and quasineutral limit.
\end{abstract}
\maketitle

\section{Introduction}
The \emph{incompressible Euler equation}
\begin{equation}\label{eq:Eul}
\begin{cases}
\p_t u + u\cdot\nabla u = -\nabla p \\
\div u = 0\\
u|_{t=0} = u^0,
\end{cases}
\qquad (t,x) \in \R\times\T^d, \ d\in\{2,3\}
\end{equation}
describes the evolution of the velocity field $u$ of an ideal fluid set on the periodic domain $\T^d$ with scalar pressure $p$ and initial datum $u^0$. Formally, one can obtain equation \eqref{eq:Eul} from classical mechanics by considering the fluid as a continuum and applying Newton's second law to infinitesimal fluid volume elements. See, for instance, \cite{Tao2018}. However, turning such reasoning into a rigorous proof is difficult, and an ongoing challenge is to derive Euler's equation from more fundamental physical descriptions. This challenge is the concern of the present article, with the ultimate goal of going from the microscopic dynamics at the quantum level to the macroscopic dynamics of the fluid \eqref{eq:Eul}.

\subsection{Background}
Starting from classical mechanics, one possibility to derive Euler's equation is to go from Newton to Euler by first passing to the mesoscopic description given by Boltzmann's equation for the evolution of the distribution function in particle phase space. One can then derive the incompressible Euler equation from the Boltzmann equation in a suitable hydrodynamic scaling regime. We refer the interested reader to Saint-Raymond's monograph \cite{SR2009book} for further elaboration.

Another possibility is to obtain incompressible Euler from the \emph{Vlasov-Poisson equation}, which is a macroscopic/hydrodynamic description of Newton's second law for a system of particles with binary Coulomb interactions. Following earlier work \cite{BG1994, Grenier1995, Grenier1996,Grenier1999}, Brenier \cite{Brenier2000} (see also \cite{Masmoudi2001}) proved that the so-called \emph{quasineutral limit} leads to the incompressible Euler equation \eqref{eq:Eul}. In turn, one can then attempt to derive the Vlasov-Poisson equation from Newton's second law, by considering the position-velocity system
\begin{equation}\label{eq:New}
\begin{cases}
\dot{x}_i = v_i \\
\displaystyle\dot{v}_i = -\lambda\sum_{1\leq j\leq N : j\neq i} \nabla V(x_i-x_j),
\end{cases}
\qquad i\in\{1,\ldots,N\}
\end{equation}
in the \emph{mean-field limit} as $N\rightarrow\infty$ with coupling constant $\lambda=\frac{1}{\vep^2 N}$, where $N$ is the number of particles, $\vep>0$ is the quasineutral parameter, and $V$ is the Coulomb potential. If $\vep$ is fixed, then as $N\rightarrow\infty$, one expects the empirical measure of the system \eqref{eq:New} to converge to a solution of the Vlasov-Poisson equation. In the classical plasma physics setting of Vlasov-Poisson, the distribution function models the evolution of electrons against a stationary background of positively charged ions. An important characteristic scale of the system is the \emph{Debye (screening) length}, which is the scale at which charge separation in the plasma occurs. If the Debye length is much smaller than the length scale of observation, the plasma is called quasineutral, as it appears neutral to an observer. After a rescaling to dimensionless variables, the parameter $\vep$ acquires the meaning of the Debye length. For further comments on quasineutrality in plasmas, we refer to the surveys \cite{Manfredi2005, GpI2020qn}.

Unfortunately, going from Newton to Vlasov-Poisson for fixed $\vep>0$, let alone $\vep\rightarrow 0^+$, is not fully understood. While the mean-field limit of \eqref{eq:New} with regular $V$ (e.g. globally $C^{1,1}$) \cite{NW1974, BH1977, Dobrushin1979, Duerinckx2021gl} or even just bounded force $\nabla V$ \cite{JW2016} has been shown, the Coulomb case remains out of reach except in dimension 1 \cite{Hauray2014}. The best results for singular potentials are limited to the strictly sub-Coulombic case \cite{HJ2007, HJ2015} or are for Coulomb potentials with short-distance vanishing cutoff \cite{BP2016, Lazarovici2016, LP2017, Grass2021}. Only recently, has the Coulomb case---and even the super-Coulombic Riesz case---come in to reach for \emph{monokinetic} initial data, for which the Vlasov-Poisson equation reduces to the \emph{pressureless Euler-Poisson equation}, due to work of Duerinckx and Serfaty \cite[Appendix]{Serfaty2020}. 

Recently, Han-Kwan and Iacobelli \cite{HkI2021} rigorously derived Euler's equation \eqref{eq:Eul} directly from the Newtonian $N$-body problem \eqref{eq:New} in the combined mean-field and quasineutral limit, assuming monokinetic initial data and imposing some restrictions on how slowly the coupling constant can vanish. The empirical measure of the system \eqref{eq:New} converges to a probability measure on $\T^d\times\R^d$ which is uniform in space and concentrated in velocity on the solution to \eqref{eq:Eul}. Writing $\vep^2 N = N^{\theta}$ for suitable $0<\theta <1$, this regime has the attractive interpretation of a \emph{supercritical mean-field limit}---an interpretation we adopt in this paper. The terminology coined by Han-Kwan and Iacobelli stems from the fact that the force term in \eqref{eq:New} is more singular than in the usual mean-field regime, since it formally diverges as $N\rightarrow\infty$. The author \cite{Rosenzweig2021ne} later extended the range of scalings for which the supercritical mean-field limit is valid to $\theta \in (1-\frac{2}{d},1)$ and gave arguments for the sharpness of this range in all dimensions. We also mention that the combined mean-field and quasineutral regime has been studied in \cite{GpI2018, GpI2020sing} without the monokinetic assumption, where the expected limiting equation is the so-called \emph{kinetic Euler equation}, but only for very $\vep$ vanishing vary slowly as $N\rightarrow\infty$.

\medskip
Transitioning from classical to quantum, it is well-known that Newton's second law of motion describes the effective dynamics of an interacting system of quantum particles in the semiclassical regime $0<\hbar \ll 1$. The validity of this classical limit is known to hold at the microscopic level of the Schr\"odinger/von Neumann equation and the macroscopic level of the Hartree equation for certain potentials $V$, including Coulomb \cite{LP1993,MM1993, GIMS1998, APP2011, FLP2012, AMN2013a, AMN2013b, BPSS2016, Lafleche2019, Saffirio2020a, Saffirio2020b, LS2020}. See the next subsection for the aforementioned equations. The Hartree equation, in turn, is the mean-field limit of the Schr\"odinger/von Neumann equation. In the case of bosons of interest to this article, the Coulomb case, sometimes called \emph{Schr\"odinger-Poisson}, has been treated with an explicit rate of convergence \cite{EY2001, RS2009, Pickl2011a}, following earlier work for more regular potentials (e.g. bounded) \cite{Spohn1980,BGM2000}. Many authors have contributed to the mean-field theory when $V$ is not necessarily Coulomb, and we refer to the surveys \cite{Golse2016ln, Rougerie2020ems, Napiorkowski2021sur} and references therein for further information. Several works \cite{NS1981,Spohn1981,GMP2003,FGS2007, PP2009, GMP2016,GP2017,GPP2018,GP2019,CLS2021} have investigated the combined semiclassical and mean-field regimes for general potentials $V$, as the $\hbar$-uniformity of the quantum $N$-body mean-field convergence rate implies mean-field convergence for the $N$-body classical dynamics. We remark that for fixed $\hbar$, the state of knowledge in terms of allowable potential singularities is much better in the quantum, as opposed to classical, setting.

Recently, Golse and Paul \cite{GP2021} considered the Schr\"odinger/von Neumann equation in the combined semiclassical and mean-field regimes. Under some technical conditions, they proved that in the limit as $\hbar+\frac{1}{N}\rightarrow 0^+$, the density and current of the first marginal of the solution to the von Neumann equation with approximately monokinetic initial data converge to a solution of the pressureless Euler-Poisson equation. One may view the result of Golse-Paul as the quantum counterpart to the aforementioned result of Duerinckx and Serfaty \cite[Appendix]{Serfaty2020} on the derivation of the Euler-Poisson equation from Newton's second law in the monokinetic regime.


\subsection{Informal statement of results}\label{ssec:introres}
The preceding discussion, in particular the recent works \cite{HkI2021, GP2021}, raises the interesting question of whether one can obtain a nontrivial limit from quantum dynamics in the semiclassical regime where the interaction potential has mean-field supercritical scaling. The purpose of this article is to affirmatively answer this question by showing that the limiting dynamics are given the incompressible Euler equation. This provides, to the best of our knowledge, a new connection between ideal hydrodynamics and quantum many-body systems, complementing the body of work on \emph{quantum hydrodynamics} \cite{Madelung1927, GM1997, KMM2019}, in particular the known link between the Schr\"odinger equation and compressible fluids. We give here an informal statement of the results of the article. For a mathematically precise presentation, see \cref{sec:pres}.

For given $\hbar,\vep>0$ and $N\geq 2$, we consider the $N$-body quantum Hamiltonian
\begin{equation}\label{eq:hamdef}
H_{\hbar,\vep,N}  \coloneqq \sum_{j=1}^N - \frac{\hbar^2}{2}\D_{x_j} + \frac{1}{N\vep^2}\sum_{1\leq j<k\leq N} V(X_j-X_k).
\end{equation}
$N\in\N$ is the number of particles, which we think of as being large, while $\hbar$ is the semiclassical parameter and $\vep>0$ is a  parameter which deviates the scaling from the usual mean-field. We think of both $\hbar$ and $\vep$ as being small. $\D_{x_j}$ denotes the Laplacian on $\T^d$ acting on the j-th particle. $V$ is the Coulomb potential on $\T^d$, i.e. the unique solution to $-\D V = \d_0 - 1$. The notation $V(X_j-X_k)$ denotes the multiplication operator with symbol $V(x_j-x_k)$. Note that the interaction is repulsive.

The evolution of a system of $N$ bosons is governed by the \emph{Schr\"odinger equation}
\begin{equation} \label{eq:Sch}
\begin{cases}
i\hbar\p_t\Phi_{\hbar,\vep,N} = H_{\hbar,\vep,N}\Phi_{\hbar,\vep,N} \\
\Phi_{\hbar,\vep,N}|_{t=0} = \Phi_{\hbar,\vep,N}^0
\end{cases}
\qquad (t,\ux_N) \in \R\times (\T^d)^N,
\end{equation}
where $\ux_N \coloneqq (x_1,\ldots,x_N)$. Given a wave function $\Phi_{\hbar,\vep,N}^t$, we pass to the associated density matrix $R_{\hbar,\vep,N}^t\coloneqq \ket*{\Phi_{\hbar,\vep,N}^t}\bra*{\Phi_{\hbar,\vep,N}^t}$.\footnote{We use here and throughout this article Dirac's bra-ket notation: for $f,g,h\in L^2(\T^d)$, the operator $\ket*{f}\bra*{g}:L^2(\T^d)\rightarrow L^2(\T^d)$ is defined by $(\ket*{f}\bra*{g})h = \ip{g}{h}_{L^2}f$. The integral kernel of $\ket*{f}\bra*{g}$ is $f(x)\ol{g(x')}$.\label{fn:Dirac}} An elementary computation shows that $R_{\hbar,\vep,N}$ is a solution to the $N$-body \emph{von Neumann equation}
\begin{equation}\label{eq:VN}
\begin{cases}
i\hbar\p_t R_{\hbar,\vep,N} = \comm{H_{\hbar,\vep,N}}{R_{\hbar,\vep,N}} \\
\displaystyle \\
R_{\hbar,\vep,N}|_{t=0} = R_{\hbar,\vep,N}^0,
\end{cases}
\qquad (t,\ux_N) \in \R\times (\T^d)^N.
\end{equation}
Above, $\comm{\cdot}{\cdot}$ denotes the usual commutator bracket.

When $N$ is very large, we are interested in statistics of the system. To this end, we consider the \emph{k-particle marginals}, alternatively reduced density matrices, $R_{\hbar,\vep,N:k}$ defined by tracing out $N-k$ particles. Focusing on the 1-particle marginal $R_{\hbar,\vep,N:1}$, the restriction to the diagonal of its integral kernel, denoted by $\rho_{\hbar,\vep,N:1}$, defines a probability density function, which we refer to as the \emph{density} of $R_{\hbar,\vep,N:1}$. Using equation \eqref{eq:VN}, one can show that $\rho_{\hbar,\vep,N:1}$ satisfies the conservation law
\begin{equation}\label{eq:rho}
\begin{cases}
\p_t\rho_{\hbar,\vep,N:1} + \div J_{\hbar,\vep,N:1}= 0 \\
\rho_{\hbar,\vep,N:1}|_{t=0} = \rho_{\hbar,\vep,N:1}^0,
\end{cases}
\end{equation}
where $\rho_{\hbar,\vep,N:1}^0$ is the density of $R_{\hbar,\vep,N:1}^0$ and $J_{\hbar,\vep,N:1}$ is the \emph{current} of $R_{\hbar,\vep,N:1}$ (see \eqref{eq:currdef} below for definition), which is a $d$-dimensional vector of signed Radon measures. As stated precisely in \cref{thm:main}, we assume the initial datum $R_{\hbar,\vep,N}^0$ to be nearly monokinetic and the initial density $\rho_{\hbar,\vep,N}^0$ to be nearly uniform.

If $\vep+\hbar+\frac{1}{N}\rightarrow 0^+$ and
\begin{equation} \label{eq:eprate}
\frac{1+(\log N)\indic_{d=2}}{\vep^2 N^{2/d}} \xrightarrow[\vep+\frac{1}{N}\rightarrow 0^+]{} 0,
\end{equation}
then we show (see \cref{thm:main}) that the density $\rho_{\hbar,\vep,N:1}^t$ converges to $1$ and the current $J_{\hbar,\vep,N:1}^t$ converges to $u^t$, where $u$ is the solution of the incompressible Euler equation \eqref{eq:Eul}, in the sense of measures. Above, $\indic_{(\cdot)}$ denotes the indicator function for the argument $(\cdot)$. Thus, we give a rigorous derivation of the incompressible Euler equation from quantum many-body dynamics. Note that our assumed scaling allows for $\hbar$ and $1/N$ to vanish at independent rates. Additionally, observe that we can interpret our scaling of the interaction potential as a supercritical mean-field limit by writing $\vep^2 N = N^{\theta}$ for $\theta \in (1-\frac{2}{d},1)$. This is exactly the same range obtained by the author \cite{Rosenzweig2021ne} for the derivation of incompressible Euler from Newton's second law. Finally, we remark that our proof is, in fact, quantitative: we give an estimate (see \cref{rem:quant}) for the differences $\rho_{\hbar,\vep,N:1}^t-1$ and $J_{\hbar,\vep,N:1}^t-u^t$ in a negative-order fractional Sobolev spaces, which holds for arbitrary $\hbar,\vep,N$. 

Let us mention that we are not the first to attempt to derive Euler's equations from quantum dynamics. Most notably, Nachtergaele and Yau \cite{NY2002, NY2003} showed, under some technical conditions on the dynamics, that the Heisenberg evolution of the energy, momentum, and particle densities for fermions with short-range interactions  converges to the compressible Euler equation in the hydrodynamic limit. The relationship of our work to \cite{NY2002, NY2003} is unclear given that we have different starting points (bosons vs. fermions), scaling limits (incompressible vs. compressible), and techniques (modulated energy vs. relative entropy).

That we are able to derive the \emph{dynamics} of the incompressible Euler equation from that of the $N$-body Schr\"odinger/von Neumann equation raises the interesting question of what other structure of Euler's equation can be similarly derived. For instance, the incompressible Euler equation is well-known to be a \emph{Hamiltonian} flow on a Poisson manifold \cite{Arnold1966, EM1970, Olver1982, Khesin2009}. The Schr\"odinger and von Neumann equations are similarly known to possess Hamiltonian structures. Given the recent work  of the author together with Mendelson et al. \cite{MNPRS2020adv} on the rigorous derivation of the Hamiltonian structure of the nonlinear Schr\"odinger equation from the N-body Schr\"odinger equation \eqref{eq:Sch} in a different asymptotic regime, it is natural to ask if the Hamiltonian and other geometric structure of the Euler equation can be derived from that of \eqref{eq:Sch} in the scaling regime of the present article. We leave this question for future investigation.

\medskip
The same methods that permit our derivation of the incompressible Euler equation \eqref{eq:Eul} from the von Neumann equation \eqref{eq:VN} in the combined classical and supercritical mean-field limits (alternatively, combined classical, quasineutral, and mean-field limits), also permit us to derive the incompressible Euler equation from the \emph{nonlinear Hartree equation} (written in the formalism of density matrices)
\begin{equation}\label{eq:Har}
\begin{cases}
i\hbar\p_t R_{\hbar,\vep} = \comm{-\frac{\hbar^2}{2}\D + \frac{1}{\vep^2}V\ast\rho_{\hbar,\vep}}{R_{\hbar,\vep}} \\
\rho_{\hbar,\vep} = R_{\hbar,\vep}|_{x=y} \\
R_{\hbar,\vep}|_{t=0} = R_{\hbar,\vep}^0,
\end{cases}
\qquad (t,x) \in\R\times\T^d,
\end{equation}
which is the mean-field limit of \eqref{eq:VN}. For a pure state, equation \eqref{eq:Har} reduces to the \emph{Schr\"odinger-Poisson equation}
\begin{equation}\label{eq:SP}
\begin{cases}
i\hbar\p_t\phi_{\hbar,\vep} = -\frac{\hbar^2}{2}\D\phi_{\hbar,\vep} + \frac{1}{\vep^2}(V\ast|\phi_{\hbar,\vep}|^2)\phi_{\hbar,\vep} \\
\phi_{\hbar,\vep}|_{t=0} = \phi_{\hbar,\vep}^0.
\end{cases}
\end{equation}
Our result (see \cref{thm:mainH}) proves convergence with an explicit rate, as $\hbar+\vep\rightarrow 0^+$, of the density $\rho_{\hbar,\vep}$ and current $J_{\hbar,\vep}$ of \eqref{eq:Har} to the uniform distribution and the solution $u$ of the incompressible Euler equation, respectively, provided that convergence holds in a suitable sense at initial time.

Previously, Puel \cite[Theorem 1.2]{Puel2002} showed for the problem on $\R^d$ that if the initial current $J_{\hbar,\vep}^0$ of $\phi_{\hbar,\vep}^0$ is divergence-free, square-integrable, and approximately monokinetic, then in the combined classical and quasineutral limits, a subsequence of the currents $J_{\hbar,\vep}$ of $\phi_{\hbar,\vep}$ converges in a certain weak topology to a dissipative solution of the incompressible Euler equation \eqref{eq:Eul} with initial datum $J_{\hbar,\vep}^0$. To our knowledge, such dissipative solutions are in general unknown to be unique, but they satisfy a weak-strong uniqueness principle so that uniqueness holds if $J_{\hbar,\vep}^0$ is sufficiently regular (e.g. $C^{1,\al}$) \cite{Lions1996}. We note that Puel's Theorem 1.2 does not give a rate of convergence, nor does the convergence hold in any norm sense, in contrast to our \cref{thm:mainH} stated below.  Puel also proved an estimate (see Proposition 3.1 in the cited work) for the difference between the current $J_{\hbar,\vep}^t$ and a smooth Euler solution $u^t$. Although this result as stated does not give norm convergence and is only for pure states in $\R^d$, for $d=3$, it is more closely comparable to our \cref{thm:mainH}.

\subsection{Comments on the proof}\label{ssec:introcomm}
Inspired by the aforementioned work of Golse and Paul \cite{GP2021} on the derivation of the pressureless Euler-Poisson equation from the many-body equation \eqref{eq:VN} in the combined classical and mean-field limits, the proof of \cref{thm:main} is based on a Gronwall-type relation for the \emph{quantum modulated energy}
\begin{multline}\label{eq:qme}
\G_{\hbar,\vep,N}(R_{\hbar,\vep,N}^t,u^t) \coloneqq \frac{1}{N}\sum_{j=1}^N\Tr_{\H_N}\paren*{\sqrt{R_{\hbar,\vep,N}^t}|\hbar D_{x_j}-u^t(X_j)|^2 \sqrt{R_{\hbar,\vep,N}^t}} \\
+ \frac{1}{\vep^2}\int_{(\T^d)^N}\Fr_N(\ux_N,1+\vep^2\Uu^t)d\rho_{\hbar,\vep,N}^t(\ux_N),
\end{multline}
where
\begin{equation}
\Fr_N(\ux_N,1+\vep^2\Uu^t) \coloneqq \int_{(\T^d)^2\setminus\triangle}V(x-y)d\paren*{\frac{1}{N}\sum_{j=1}^N \d_{x_j} - 1-\vep^2\Uu^t}^{\otimes 2}(x,y).
\end{equation}
Above, $\rho_{\hbar,\vep,N}^t$ is the density associated to $R_{\hbar,\vep,N}^t$, $D_{x_j} \coloneqq -i\nabla_{x_j}$, and $u^t(X_j)$ is the vector-valued multiplication operator with symbol $u^t(x_j)$. $\triangle$ denotes the diagonal of $(\T^d)^2$. The trace is taken over the N-particle Hilbert space $\H_N=L^2((\T^d)^N)$. $\Uu^t$ is a corrector about which we say more below. We abuse notation here and throughout the article by using the same symbol for both a measure and its density.

The first term in the right-hand side of \eqref{eq:qme} is the kinetic energy portion and is as in the Golse-Paul quantum modulated energy (see beginning of Section 3 in their work), except now $u$ solves the incompressible Euler equation rather than the pressureless Euler-Poisson equation. Note that by using the bosonic symmetry of $R_{\hbar,\vep,N}^t$, we have the identity
\begin{multline}
\frac{1}{N}\sum_{j=1}^N\Tr_{\H_N}\paren*{\sqrt{R_{\hbar,\vep,N}^t}|\hbar D_{x_j}-u^t(X_j)|^2 \sqrt{R_{\hbar,\vep,N}^t}}\\
 = \Tr_{\H}\paren*{\sqrt{R_{\hbar,\vep,N:1}^t}|\hbar D-u^t(X)|^2 \sqrt{R_{\hbar,\vep,N:1}^t}},
\end{multline}
where now the trace is taken over the 1-particle Hilbert space $\H=L^2(\T^d)$ and the density matrices are the 1-particle marginals.

The second term in the right-hand side of \eqref{eq:qme}, which is the potential energy portion, corresponds to the expectation of the classical modulated energy from Serfaty's work \cite{Serfaty2020} with inputs $\ux_N$ and $1+\vep^2\Uu^t$, where the $\ux_N$ are regarded as randomly distributed with law $\rho_{\hbar,\vep,N}^t$. The quantity $\Fr_N(\ux_N,1+\vep^2\Uu^t)$ can be understood as a renormalization of the $\dot{H}^{-1}$ norm of $\frac{1}{N}\sum_{j=1}^N\d_{x_j}-1-\vep^2\Uu^t$. Note that in contrast to the classical setting, the positions $\ux_N$ are not time-dependent, reflecting the fact that there is not a precise notion of point-particle dynamics in the quantum setting.\footnote{Bohmian mechanics \cite{deBroglie1927,Bohm1952i, Bohm1952ii}, alternatively pilot-wave theory, attempts to supplement the Schr\"odinger wave function with a notion of particle trajectories. We refer the interested reader to the book \cite{DT2009} for further discussion on theory and to the survey \cite{BO2020} for some recent experimental developments.}  Observe that by symmetry with respect to permutation of particle labels, we have the identity
\begin{multline}\label{rem:pMEsym}
\int_{(\T^d)^N}\Fr_N(\ux_N,1+\vep^2\Uu^t)d\rho_{\hbar,\vep,N}^t(\ux_N)= \frac{(N-1)}{N}\int_{(\T^d)^2}V(x_1-x_2)d\rho_{\hbar,\vep,N:2}^t(x_1,x_2) \\
-2\int_{\T^d} (-\D)^{-1}(1+\vep^2\Uu^t)(x_1)d\rho_{\hbar,\vep,N:1}^t(x_1) + \int_{(\T^d)^2}V(x-y)d\paren*{1+\vep^2\Uu^t}^{\otimes 2}(x,y).
\end{multline}

Compared to the Golse-Paul functional, a key difference in our choice of \eqref{eq:qme} is the presence of the corrector $\vep^2\Uu^t \coloneqq \vep^2\p_{x_\al}(u^t)^{\be}\p_{x_\be}(u^t)^\al$, with implicit summation over $\al,\be$, the choice of which is motivated by the aforementioned work of Han-Kwan and Iacobelli \cite{HkI2021} and the author \cite{Rosenzweig2021ne}. To understand the intuition behind the corrector, recall that the pressure $p$ is obtained from the velocity field $u$ through the Poisson equation $-\D p = \Uu$, which follows by taking the divergence of both sides of \eqref{eq:Eul} and using that $u$ is divergence-free. As will be clear during the proof of \cref{prop:MEtd} below, the corrector is necessary to cancel out a contribution of the pressure term when we compute the variation of the quantum modulated energy with respect to time (see \eqref{eq:presscan}). The technical novelty of our work consists, in part, of showing that such a corrector may be incorporated into a suitable quantum modulated energy.

In some sense, the original functional used by Golse and Paul is the quantum analogue of the modulated energy used by Duerinckx and Serfaty \cite[Appendix]{Serfaty2020} to derive the pressureless Euler-Poisson equation from the Newton's second law with mean-field scaling in the monokinetic regime. Similarly, one may view our functional \eqref{eq:qme} as the quantum analogue of the modulated energy used by Han-Kwan and Iacobelli \cite{HkI2021} and the author \cite{Rosenzweig2021ne} to derive the incompressible Euler equation from Newton's second law with supercritical mean-field scaling in the monokinetic regime.

\begin{remark}\label{rem:MElb}
As is by now well-known (see \cite[Proposition 3.3]{Serfaty2020}), the functional $\Fr_N(\ux_N,\mu)$, for any bounded density $\mu$, is not necessarily a nonnegative quantity. But there is a constant $C_d>0$, depending only on the dimension $d$, such that
\begin{equation}\label{eq:MElb}
\Fr_N(\ux_N,\mu) + \frac{C_d(1+\|\mu\|_{L^\infty})(1+(\log N)\indic_{d=2})}{N^{2/d}} \geq 0.
\end{equation}
\end{remark}

Differentiating the functional $\G_{\hbar,\vep,N}(R_{\hbar,\vep,N}^t,u^t)$ with respect to time (see \cref{prop:MEtd}), we find that to close our Gronwall argument, we need to estimate, among other terms, the expression
\begin{equation}\label{eq:introcomm}
\frac{1}{\vep^2}\left|\int_{(\T^d)^2\setminus\triangle} \paren*{u^t(x)-u^t(y)}\cdot\nabla V(x-y)d\paren*{\frac{1}{N}\sum_{j=1}^N\d_{x_j}-1-\vep^2\Uu^t}^{\otimes 2}(x,y)\right|
\end{equation}
for a given configuration $\ux_N = (x_1,\ldots,x_N)\in (\T^d)^N$. Such expressions were originally understood in terms of a stress-energy tensor structure \cite{Duerinckx2016,Serfaty2020}. Subsequently, they were reinterpreted in terms of a \emph{commutator} that has been \emph{renormalized} through the excision of the diagonal in order to allow for the singularity of the Dirac masses \cite{Rosenzweig2020spv, NRS2021}. We can estimate (see \cref{prop:comm}) the expression \eqref{eq:introcomm} in terms of $\frac{1}{\vep^2}\Fr_N(\ux_N,1+\vep^2\Uu^t)$ up to an error of size $\frac{(1+(\log N)\indic_{d=2})}{N^{2/d}\vep^2}$. It is at this step in the analysis where we obtain the scaling restriction \eqref{eq:eprate}.

We remark that $\frac{(1+(\log N)\indic_{d=2})}{N^{2/d}\vep^2}$ also appears in the lower bound \eqref{eq:MElb} after multiplying both sides by $\vep^{-2}$. It is known in the static case on $\R^d$ with a confining potential (e.g., see \cite[Introduction]{RS2016}) that $\frac{1+(\log N)\indic_{d=2}}{N^{2/d}}$ gives the optimal scaling for the functional $\Fr_N(\cdot,\cdot)$. If this is also the case in our dynamical setting, then we expect our scaling relation \eqref{eq:eprate} to be sharp. Writing $\vep^2 N = N^\theta$, we see that we can allow for $1-\frac{2}{d}<\theta < 1$, just as in the author's previous derivation of Euler from Newton \cite{Rosenzweig2021ne}. We leave the very interesting question of the effective limiting dynamics at and below the threshold $\theta = 1-\frac{2}{d}$ for future investigation.

\subsection{Organization of article}\label{ssec:introorg}
Let us briefly comment on the organization of the remaining body of the article. In \cref{sec:pres}, we introduce the basic notation of the paper (\cref{ssec:presnot}), review preliminary facts about the Cauchy problem for the von Neumann and Hartree equations (\cref{ssec:presset}), and precisely state our main results \Cref{thm:main,thm:mainH} along with extensive commentary (\cref{ssec:presmt}). \cref{sec:Mpf} is the main part of the article and devoted to the proofs of \Cref{thm:main,thm:mainH} along with the details behind the convergence assertions of \cref{rem:quant}. The section consists of several subsections: \Cref{ssec:Mpftd,ssec:Mpffi,ssec:Mpfgron} prove the main quantum modulated energy estimate \eqref{eq:thmmain}, \cref{ssec:Mpfmain} completes the proof of \cref{thm:main} and elaborates on \cref{rem:quant}, and \cref{ssec:MpfH} completes the proof of \cref{thm:mainH}.

\subsection{Acknowledgments}\label{ssec:introack}
The author thanks Thomas Chen, Fran\c{c}ois Golse, and Sylvia Serfaty for their correspondence related to this project. The author also gratefully acknowledge the hospitality of the Institute for Computational and Experimental Research in Mathematics (ICERM) where the manuscript for this project was completed during the ``Hamiltonian Methods in Dispersive and Wave Evolution Equations'' semester program.

\section{Presentation of results}\label{sec:pres}
In this section, we give a mathematically precise statement of the results of this paper, which we informally described in \cref{ssec:introres}.

\subsection{Basic notation}\label{ssec:presnot}
We introduce here the basic notation of the article.

Given nonnegative quantities $A$ and $B$, we write $A\lesssim B$ if there exists a constant $C>0$, independent of $A$ and $B$, such that $A\leq CB$. If $A \lesssim B$ and $B\lesssim A$, we write $A\sim B$. To emphasize the dependence of the constant $C$ on some parameter $p$, we sometimes write $A\lesssim_p B$ or $A\sim_p B$. Similarly, we use $C$ for a generic constant, with subscripts to emphasize its dependence on parameters. We denote the natural numbers excluding zero by $\N$ and including zero by $\N_0$. We denote the positive real numbers by $\R_+$. Given $N\in\N$ and points $x_{1},\ldots,x_{N}$ in some set $X$, we will write $\ux_N$ to denote the $N$-tuple $(x_{1},\ldots,x_{N})$.

$\M(\T^d)$ denotes the space of complex-valued Borel measures on $\T^d$, and $\P(\T^d)$ denotes the subspace of probability measures (i.e. elements $\mu\in\M(\T^d)$ with $\mu\geq 0$ and $\mu(\T^d)=1$). For $\mu$ absolutely continuous with respect to Lebesgue measure on $\T^d$, we abuse notation by writing $\mu$ for both the measure and its density function. We denote the Banach space of complex-valued continuous functions on $\T^d$ by $C(\T^d)$ equipped with the uniform norm $\|\cdot\|_{\infty}$. More generally, we denote the Banach space of $k$-times continuously differentiable functions with bounded derivatives up to order $k$ by $C^k(\T^d)$ equipped with the natural norm. $C^{k,\al}$ denotes the space of functions whose $k$-th order partial derivatives are $\al$-H\"older continuous, for $0<\al\leq 1$. We set $C^\infty \coloneqq \bigcap_{k=1}^\infty C^k$, and we use the subscript $c$ to indicate functions with compact support. $L^p(\Omega)$ denotes the usual Banach space of $p$-integrable functions with norm $\|\cdot\|_{L^p}$. The space of distributions on $\T^d$, equivalently periodic distributions on $\R^d$, is denote by $\Dc'(\T^d)$.

\subsection{Quantum setup}\label{ssec:presset}
We first expose the dynamics at the microscopic level of finite $\hbar,\vep,N$. This subsection follows, in part, the presentation of \cite[Section 2]{GP2021}.

We recall from the introduction that $V$ denotes the element of $\Dc'(\T^d)$
\begin{equation}
\label{eq:VFS}
\frac{1}{(2\pi)^2}\sum_{{k\in\Z^d} : {k\neq 0}} \frac{e^{2\pi ik\cdot x}}{|k|^2},
\end{equation}
which is, in fact, an element of $C^\infty(\T^d\setminus\{0\})$ satisfying
\begin{equation}
-\D V = \d_{0} - 1.
\end{equation}
Here and throughout the article, it is convenient to identify $\T^d$ with $[-\frac{1}{2},\frac{1}{2}]^d$ with periodic boundary conditions. Let $V_{\R^d}$ denote the Coulomb potential on $\R^d$:
\begin{equation}
\label{eq:VRd}
V_{\R^d}(x) \coloneqq  \begin{cases}
-\frac{1}{2\pi}\log|x|, & {d=2} \\
c_d|x|^{2-d}, & {d\geq 3},
\end{cases}
\end{equation}
where the normalizing constant $c_d = \frac{1}{d(d-2)|B(0,1)|}$. As is well-known, there exists a function $V_{loc} \in C^\infty(\ol{B(0,1/4)})$ such that 
\begin{equation}
\label{eq:Vasy}
V(x) = V_{\R^d}(x) + V_{loc}(x) \qquad \forall x\in\ol{B(0,1/4)}.
\end{equation}

Let $\H\coloneqq L^2(\T^d)$ and $\H_N \coloneqq L^2((\T^d)^N) \simeq \H^{\otimes N}$ be the $1$-particle and $N$-particle Hilbert spaces, respectively. By Sobolev embedding and the Kato-Rellich theorem \cite[Theorem X.12]{RSII}, the quantum Hamiltonian $H_{\hbar,\vep,N}$ defined in \eqref{eq:VN} is a self-adjoint operator on domain $H^2((\T^d)^N)$. Stone's theorem \cite[Theorem VII.8]{RSI} tells us that $H_{\hbar,\vep,N}$ generates a one-parameter unitary group $\{e^{it H_{\hbar,\vep,N}}\}_{t\in\R}$ on the Hilbert space $\H_N$.

Since we ultimately want to consider mixed states, let us start directly with the Cauchy problem for the von Neumann equation \eqref{eq:VN}. To this end, we denote the usual Schatten spaces by $\L^p(\H_N)$, for $1\leq p\leq \infty$, with associated norm $\|\cdot\|_p$. $\L^1$ are the trace-class operators, $\L^2$ Hilbert-Schmidt, and $\L^\infty$ compact. The space of bounded operators on $\H_N$ is denoted by $\L(\H_N)$. The subset of $\L^1$ consisting of density matrices (i.e. nonnegative, self-adjoint operators with unit trace) on a given Hilbert space $\mathscr{H}$ is denoted by $\Dc(\mathscr{H})$. In the case $\mathscr{H}=\H_N$, we use the subscript $s$ to denote those operators $R_N$ which have bosonic symmetry:
\begin{equation}
U_{\sigma}R_NU_{\sigma}^{*}=R_N \qquad \forall \sigma\in\Ss_N,
\end{equation}
where
\begin{equation}
U_{\sigma}\Phi(x_1,\ldots,x_N) \coloneqq \Phi(x_{\sigma^{-1}},\ldots,x_{\sigma^{-1}(N)}).
\end{equation}
Given initial datum $R_{\hbar,\vep,N}^{0} \in \Dc(\H_N)$, the unique solution to equation \eqref{eq:VN} in the class $C(\R; \Dc(\H_N))$ is given by
\begin{equation}
R_{\hbar,\vep,N}^t \coloneqq e^{-itH_{\hbar,\vep,N}/\hbar}R_{\hbar,\vep,N}^0 e^{itH_{\hbar,\vep,N}/\hbar}.
\end{equation}
Moreover, if $R_{\hbar,\vep,N}^0\in \Dc_s(\H_N)$, then $R_{\hbar,\vep,N}^t\in \Dc_s(\H_N)$ for all $t\in\R$.

Any density matrix $R_N$ on $\H_N$ has an integral kernel, which we also denote by $R_N$ with an abuse of notation. Since trace-class operators are a fortiori Hilbert-Schmidt, this kernel belongs to $L^2((\T^d)^N \times (\T^d)^N)$. In fact, the map $x\mapsto R_N(\cdot,\cdot+x)$ belongs to $C((\T^d)^N; L^1((\T^d)^N))$ \cite[footnote 1, pp. 61-62]{GP2017}. Consequently, the kernel has a well-defined restriction to the diagonal denoted by $\rho_N$ and called the \emph{density} of $R_N$. Since $\rho_N$ is nonnegative pointwise a.e., belongs to $L^1((\T^d)^N)$, and has unit integral, $\rho_N$ is, indeed, a probability density function.

In addition to a density, a density matrix $R_N$ on $\H_N$ defines a {current}. Indeed, let $D_{\ux_N}\coloneqq -i\nabla_{\ux_N}$, where $\nabla_{\ux_N} = (\nabla_{x_1},\ldots,\nabla_{x_N})$, and suppose that
\begin{equation}
\Tr_{\H_N}\paren*{\sqrt{R_N}|D_{\ux_N}|^2\sqrt{R_N}}<\infty.
\end{equation}
Then for each $k\in\{1,\ldots,N\}$ and $\al\in\{1,\ldots,d\}$, we can use the Riesz representation theorem \cite[Theorem IV.17]{RSI} to define a signed Radon measure $J_{k,N}^{\al}$ on $(\T^d)^N$ by the formula
\begin{equation}\label{eq:currdef}
\int_{(\T^d)^N}\phi(\ux_N)dJ_{k,N}^\al(\ux_N) \coloneqq \Tr_{\H_N}\paren*{ \paren*{\phi(\ux_N)\vee \frac{\hbar}{2}D_{x_k^\al}}R_N}, \qquad \forall \phi\in C((\T^d)^N).
\end{equation}
Here, $\vee$ denotes the anti-commutator of two operators defined by
\begin{equation}
A \vee B \coloneqq AB+BA.
\end{equation}
The vector $J_N \coloneqq (J_{1,N},\ldots,J_{N,N})$, where $J_{k,N}\coloneqq (J_{k,N}^1,\ldots,J_{k,N}^d)$, is a signed Radon-measure-valued vector field called the \emph{current} of $R_N$.

Additionally, if $R_N\in \Dc_s(\H_N)$, then we define the k-th marginal $R_{N:k}$ to be the unique trace-class operator on the k-particle Hilbert space $\H_k$ with the property
\begin{equation}
\Tr_{\H_k}\paren*{R_{N:k}A_k} = \Tr_{\H_N}\paren*{R_{N}\paren*{A_k \otimes \Ib_{\H_{N-k}}}} \qquad \forall A_k \in \L(\H_k),
\end{equation}
where $\Ib_{\H_{N-k}}$ denotes the identity operator on the $(N-k)$-particle Hilbert space $\H_{N-k}$.

Suppose that $R_{\hbar,\vep,N}^{0} \in \Dc_s(\H_N)$ is an N-particle bosonic density matrix satisfying the regularity condition
\begin{equation}\label{eq:iddmreg}
\Tr_{\H_N}\paren*{\sqrt{R_{\hbar,\vep,N}^0}\paren*{\Ib_{\H_N}-\frac{\hbar^2}{2}\sum_{j=1}^N \D_{x_j}}^2 \sqrt{R_{\hbar,\vep,N}^0}} <\infty.
\end{equation}
If $R_{\hbar,\vep,N}$ is the solution to \eqref{eq:VN} with initial datum $R_{\hbar,\vep,N}^{0}$, then one can show (see \cite[pp. 11-12]{GP2021}) that $\|H_{\hbar,\vep,N}\sqrt{R_{\hbar,\vep,N}^t}\|_2$ is controlled by the left-hand side of \eqref{eq:iddmreg}, hence finite, for all $t\in\R$. In particular, $R_{\hbar,\vep,N}^t$ has finite, conserved energy $\Tr_{\H_N}(H_{\hbar,\vep,N}R_{\hbar,\vep,N}^t)$. Additionally, for such $R_{\hbar,\vep,N}^t$, the quantum modulated energy $\G_{\hbar,\vep,N}(R_{\hbar,\vep,N}^t,u^t)$ is well-defined. Indeed, the kinetic term is finite by conservation of mass and energy together with $u^t\in L^\infty$. To see that the potential term is finite, we use the identity \cref{rem:pMEsym}. Note that
\begin{equation}
\frac{(N-1)}{N}\left|\int_{(\T^d)^2}V(x_1-x_2)d\rho_{\hbar,\vep,N:2}^t(x_1,x_2)\right| \lesssim C_d+ \frac{1}{N}\Tr_{\H_N}\paren*{H_{\hbar,\vep,N}R_{\hbar,\vep,N}^t}.
\end{equation}
Also, observe from \eqref{eq:Vasy} that $\|(-\D)^{-1}(1+\vep^2\Uu^t)\|_{L^\infty} \lesssim_d \vep^2\|\nabla u^t\|_{L^\infty}^2$, which is finite by assumption. Since $\rho_{\hbar,\vep,N:1}^t$ is a probability density, it follows that
\begin{multline}
\left|\int_{\T^d} (-\D)^{-1}(1+\vep^2\Uu^t)(x_1)d\rho_{\hbar,\vep,N:1}^t(x_1)\right| + \left|\int_{(\T^d)^2}V(x-y)d\paren*{1+\vep^2\Uu^t}^{\otimes 2}(x,y)\right|\\
 \lesssim_d \vep^2\|\nabla u^t\|_{L^\infty}^2\paren*{1+\vep^2\|\nabla u^t\|_{L^\infty}^2}.
\end{multline}

\medskip
Lastly, we discuss the 1-particle mean-field dynamics of the nonlinear Hartree equation \eqref{eq:Har}. If the initial datum $R_{\hbar,\vep}^0$ satisfies the regularity condition
\begin{equation}\label{eq:iddmregH}
\Tr_{\H}\paren*{\sqrt{R_{\hbar,\vep}^0}|D|^4\sqrt{R_{\hbar,\vep}^0}} < \infty,
\end{equation}
then it is classical (e.g., see \cite{BDpF1976}) that the Cauchy problem \eqref{eq:Har} admits a unique, global solution $R_{\hbar,\vep}$ in $C(\R; \Dc(\H))$. Moreover, if we introduce the 1-particle time-dependent Hamiltonian (i.e. the mean-field Hamiltonian)
\begin{equation}\label{eq:mfham}
H_{\hbar,\vep}^t \coloneqq -\frac{\hbar}{2}\D + \frac{1}{\vep^2}(V\ast\rho_{\hbar,\vep}^t)(X),
\end{equation}
where $\rho_{\hbar,\vep}^t$ is the density of $R_{\hbar,\vep}^t$, then $\|H_{\hbar,\vep}^t\sqrt{R_{\hbar,\vep}^t}\|_2$ is finite for all $t$ and the energy $\Tr_{\H}(H_{\hbar,\vep}^tR_{\hbar,\vep}^t)$ is conserved.

\subsection{Main theorems}\label{ssec:presmt}
With the preliminaries out of the way, we are now ready to state our main results. Our first theorem (cf. \cite[Theorem 2.2]{GP2021}, \cite[Theorem 1.1]{HkI2021}, \cite[Theorem 1.1]{Rosenzweig2021ne}) shows that as $N\rightarrow \infty$ and $\hbar\rightarrow 0^+$ with independent rates, while $\vep\rightarrow 0^+$ respecting \eqref{eq:eprate}, the k-particle density converges to the uniform distribution while the k-particle current converges to a vector field completely determined by a solution of the incompressible Euler equation on an interval for which the equation admits a classical solution, provided that the initial quantum modulated energy tends to zero in the same limit.

\begin{thm}\label{thm:main}
Let $d\in\{2,3\}$, $0<\al\leq 1$. Suppose that $T>0$ is such that $u\in L^\infty([0,T]; C^{1,\al}(\T^d))$ is a solution to the incompressible Euler equation \eqref{eq:Eul}. Given $N\geq 2$ and $\hbar,\vep>0$, let $R_{\hbar,\vep,N}^{0} \in \Dc_s(\H_N)$ satisfy the regularity condition \eqref{eq:iddmreg}. Let $R_{\hbar,\vep,N}$ denote the solution of the von Neumann \eqref{eq:VN} in $C(\R; \Dc_s(\H_N))$ with initial datum $R_{\hbar,\vep,N}^0$. For $1\leq k\leq N$, let $R_{\hbar,\vep,N:k}$ denote the k-particle marginal of $R_{\hbar,\vep,N}$, and let $\rho_{\hbar,\vep,N:k}$ and $J_{\hbar,\vep,N:k}$ respectively denote the density and current of $R_{\hbar,\vep,N:k}$.

There exist constants $C_d, C_{d,\al}>0$ depending only on $d$ and $d,\al$, respectively, such that for every $0\leq t\leq T$,
\begin{multline}\label{eq:thmmain}
\left|\G_{\hbar,\vep,N}(R_{\hbar,\vep,N}^t,u^t)\right| \leq e^{C_d\int_0^t(1+\|\nabla u^\tau\|_{L^\infty})d\tau} \Bigg(\left|\G_{\hbar,\vep,N}(R_{\hbar,\vep,N}^0, u^0)\right| \\
+ \frac{C_d(1+(\log N)\indic_{d=2})}{\vep^2 N^{2/d}}\int_0^t (1+\|\nabla u^\tau\|_{L^\infty})(1+\vep^2\|\nabla u^\tau\|_{L^\infty}^2) d\tau\\
 + C_{d,\al}\int_0^t \paren*{\vep^2 \|u^\tau\|_{C^{1,\al}}^3+N^{-1/d}}\|u^\tau\|_{C^{1,\al}}^3 d\tau\Bigg).
\end{multline}
Consequently, if $\{R_{\hbar,\vep,N}^0\}_{\hbar,\vep,N}$ is a sequence of initial data such that
\begin{equation}
\G_{\hbar,\vep,N}(R_{\hbar,\vep,N}^0, u^0) \xrightarrow[\frac{1}{N}+\hbar+\vep\rightarrow 0^+]{} 0,
\end{equation}
provided that $\vep=\vep(N)$ is chosen so that the limit \eqref{eq:eprate} holds, then for $1\leq j\leq k, \ 1\leq \al\leq d$,
\begin{equation}\label{eq:thmmaincnv}
\rho_{\hbar,\vep,N:k}^t \xrightharpoonup[\frac{1}{N}+\hbar +\vep\rightarrow 0^+]{} 1 \qquad \text{and} \qquad (J_{\hbar,\vep,N:k}^t)_{j}^{\al} \xrightharpoonup[\frac{1}{N}+\hbar+\vep\rightarrow 0^+]{} (u^t)^{\al}
\end{equation}
in the weak-* topology for the space $\M((\T^d)^k)$ of signed Radon measures on $(\T^d)^k$.
\end{thm}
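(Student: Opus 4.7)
My plan is to prove the estimate \eqref{eq:thmmain} by establishing a Gronwall-type inequality for $t\mapsto\G_{\hbar,\vep,N}(R_{\hbar,\vep,N}^t, u^t)$, then deduce the weak convergence \eqref{eq:thmmaincnv} by combining the resulting bound with the coercivity estimate \eqref{eq:MElb}, using that the modulated energy jointly controls an $H^{-1}$-type distance of $\rho_{\hbar,\vep,N:1}^t$ to $1+\vep^2\Uu^t$ and an $L^2$-type defect of $J_{\hbar,\vep,N:1}^t$ from $u^t$. The structural split mirrors Han-Kwan--Iacobelli \cite{HkI2021} and \cite{Rosenzweig2021ne} on the classical side, but executed in the noncommutative setting \`a la Golse--Paul \cite{GP2021}.

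\textbf{Time differentiation.} Write $\G^t = \G_{\mathrm{kin}}^t + \vep^{-2}\G_{\mathrm{pot}}^t$. Differentiating $\G_{\mathrm{kin}}^t$ via the von Neumann equation \eqref{eq:VN} and bosonic symmetry, after integration by parts and use of the Euler equation to replace $\partial_t u$ by $-(u\cdot\nabla) u - \nabla p$, I would obtain: (a) a main term bounded by $C\|\nabla u^t\|_\infty \G_{\mathrm{kin}}^t$ coming from the interaction of $(u\cdot\nabla)u$ with $(\hbar D - u)^{\otimes 2}$; (b) a pressure-flux term $2\int \nabla p^t\cdot J_{\hbar,\vep,N:1}^t$; (c) a Coulombic cross term proportional to
\begin{equation*}
\frac{1}{\vep^2 N}\sum_{j\neq k}\Tr\Bigl(R_{\hbar,\vep,N}^t\,(\hbar D_{x_j}-u^t(X_j))\cdot \nabla V(X_j-X_k)\Bigr),
\end{equation*}
symmetrized in $j\leftrightarrow k$; and (d) $O(\hbar)$ quantum corrections controlled using the identity $[\hbar D, u^t(X)] = -i\hbar(\nabla u^t)(X)$ together with conservation of the $N$-body energy. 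Differentiating $\vep^{-2}\G_{\mathrm{pot}}^t$ via \eqref{rem:pMEsym} and the density conservation law \eqref{eq:rho}, I would obtain: (a$'$) a term from $\partial_t \Uu^t$ bounded crudely by $C\|u^t\|_{C^{1,\al}}^3$; (b$'$) a pressure-flux term that \emph{exactly cancels} (b), since $-\Delta p = \Uu^t$ and $\div u = 0$ give $\nabla (-\Delta)^{-1}(1+\vep^2\Uu^t) = \vep^2\nabla p^t$; and (c$'$) a classical cross term that combines with (c) via the symmetry of $\nabla V$.

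\textbf{Renormalized commutator and Gronwall.} The combined cross terms (c) $+$ (c$'$), modulo $O(\hbar)$ corrections from the noncommutativity of $\hbar D_{x_j}$ and $u^t(X_j)$, amount to the integral against $\rho_{\hbar,\vep,N}^t(\ux_N)$ of the renormalized commutator
\begin{equation*}
\frac{1}{\vep^2}\int_{(\T^d)^2\setminus\triangle}\bigl(u^t(x)-u^t(y)\bigr)\cdot\nabla V(x-y)\,d\paren*{\tfrac{1}{N}\sum_{j}\d_{x_j} - 1 - \vep^2\Uu^t}^{\otimes 2}\!(x,y),
\end{equation*}
which by the estimate \cref{prop:comm} is bounded by $C_d\|\nabla u^t\|_\infty \vep^{-2}\Fr_N(\ux_N,1+\vep^2\Uu^t)$ plus the additive error $C_d(1+\log N\,\indic_{d=2})(1+\vep^2\|\nabla u^t\|_\infty^2)/(\vep^2 N^{2/d})$. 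Assembling (a)--(d) and (a$'$)--(c$'$) gives a differential inequality of the form $\tfrac{d}{dt}|\G^t|\lesssim_d (1+\|\nabla u^t\|_\infty)|\G^t|+\mathrm{forcing}$, where the forcing sums the $\vep^2\|u\|_{C^{1,\al}}^6$ contribution from (a$'$), the $N^{-1/d}\|u\|_{C^{1,\al}}^6$ contribution from discretization errors in (c$'$) and the boundary of the diagonal excision, and the commutator error above. Gronwall's lemma then produces exactly \eqref{eq:thmmain}.

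\textbf{Convergence and main obstacle.} For \eqref{eq:thmmaincnv}, observe that under \eqref{eq:eprate} the forcing in \eqref{eq:thmmain} vanishes as $\vep+\hbar+1/N\to 0$, so $|\G(R_{\hbar,\vep,N}^t, u^t)|$ is uniformly controlled on $[0,T]$ by a quantity that vanishes in the limit. Applying \eqref{eq:MElb} to $1+\vep^2\Uu^t$ (whose $L^\infty$ norm is $O(1)$), both the kinetic part $\G_{\mathrm{kin}}^t$ and the modulated potential part $\vep^{-2}\G_{\mathrm{pot}}^t$ (up to the $\vep^{-2}N^{-2/d}$ correction) are nonnegative and each vanishes. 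The usual coercivity of the renormalized Coulomb energy (e.g., \cite[Section 4]{Serfaty2020}) then yields $\rho_{\hbar,\vep,N:1}^t\rightharpoonup 1$ in $\M(\T^d)$, while Cauchy--Schwarz applied to $\G_{\mathrm{kin}}^t$ against a test function gives $J_{\hbar,\vep,N:1}^t\rightharpoonup u^t$; passage to $k$-marginals uses bosonic symmetry. The main technical difficulty is step 2: one must carefully track all commutators between the multiplication operator $u^t(X_j)$ and the momentum $\hbar D_{x_j}$, verify that every quantum correction produced by their noncommutativity is either $O(\hbar)$-small relative to $\G^t$ or absorbable by conserved quantities, and ensure that the pressure cancellation via the corrector survives at the level of the full quantum modulated energy (not merely in some Vlasov approximation). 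Showing that the resulting cross-terms amalgamate precisely into the renormalized commutator above---so that \cref{prop:comm} applies and produces the scaling threshold \eqref{eq:eprate}---is the technical novelty relative to \cite{GP2021}.
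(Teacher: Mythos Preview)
Your overall strategy matches the paper's proof closely. Two points need correction, however.

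First, the term (a$'$) from $\partial_t\Uu^t$ cannot be ``bounded crudely by $C\|u^t\|_{C^{1,\al}}^3$.'' After the $\vep^2$ in the corrector cancels the $\vep^{-2}$ prefactor, this term reads $-2\int_{(\T^d)^N}\int\partial_t p^t\,d(\mu_N-1-\vep^2\Uu^t)\,d\rho_{\hbar,\vep,N}^t$, and a total-variation bound only yields $O(1)$ forcing, which would not vanish in the limit. To produce the $\vep^2\|u\|_{C^{1,\al}}^6$ contribution you later claim, you must instead apply the coercivity estimate \cref{prop:coer} and then split via $ab\le\vep^2 a^2+\vep^{-2}b^2$, feeding the resulting $\vep^{-2}\Fr_N$ term back into the Gronwall factor; this is also where the $N^{-1/d}\|u\|_{C^{1,\al}}^3$ forcing originates, not from ``diagonal excision.'' Relatedly, when you rewrite (c)$+$(c$'$) as the renormalized commutator via the identity of \cite{HkI2021}, an additional linear term $\int\div(-\D)^{-1}(u^t\Uu^t)\,d(\mu_N-1-\vep^2\Uu^t)$ appears (cf.\ \eqref{eq:HkIidrho}), which you have not accounted for and which requires the same coercivity treatment.

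Second, there are no residual ``$O(\hbar)$ quantum corrections'' in step (d): the operator identity from \cite[proof of Proposition 3.4]{GP2021} shows that all commutators between $\hbar D$ and $u(X)$ in the kinetic computation recombine \emph{exactly} into the symmetric-gradient term $-2\Tr\bigl(\sqrt{R}\,(\hbar D-u)\Sigma(\hbar D-u)\sqrt{R}\bigr)$ (see \eqref{eq:GPidke}), with nothing left over to control separately.
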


As commented in the introduction, \cref{thm:main} provides a derivation of the incompressible Euler equation directly from a system of interacting bosons in the combined classical and supercritical mean-field limit, provided the initial data are asymptotically monokinetic. To the best of our knowledge, this role of the incompressible Euler equation as an effective equation for the 1-particle current of a system of bosons has not been previously shown. The fact that we obtain an incompressible fluid description in the limit stands in contrast to the well-known connection between quantum mechanics and compressible fluids through the Madelung transformation \cite{Madelung1927, GM1997}.

Before stating our second result, let us record some remarks on the statement of \cref{thm:main} and the assumptions contained in it.

\begin{remark}\label{rem:ID}
We can construct examples of initial data satisfying the conditions of \cref{thm:main} as follows (cf. \cite[Remark 1.4]{Rosenzweig2021ne}).

Suppose that $u_0$ is a smooth, divergence-free vector field on $\T^d$. Let $\la>0$ be a small parameter, and let $\chi_\la\in C_c^\infty(\R^d)$ be a smooth approximation of the characteristic function $1_{[-\frac{1}{2}+2\la,\frac{1}{2}-2\la]^d}$, such that $\supp(\chi_\la) \subset [-\frac{1}{2}+\la,\frac{1}{2}-\la]^d$, $0\leq\chi_\la\leq 1$, and $\|\nabla^{\otimes k}\chi_\la\|_{L^\infty} \lesssim_{k} \la^{-k}$. For a probability density $\rho_{\hbar,\vep}^0$ on $\T^d$, consider a 1-particle density matrix of the form
\begin{equation}
R_{\la,\hbar,\vep}^0(x,x') \coloneqq c_{\la,\hbar,\vep}\sqrt{\rho_{\hbar,\vep}^0(x)}\sqrt{\rho_{\hbar,\vep}^0(x')}\chi_{\la}(x)\chi_{\la}(x')e^{\frac{i(x-x')\cdot u^0(\frac{x+x'}{2})}{\hbar}}, \qquad x,x'\in (-\frac{1}{2},\frac{1}{2})^d,
\end{equation}
with (smooth) periodic extension in each coordinate to all of $\R^d\times\R^d$. The normalizing constant $c_{\la,\hbar,\vep}$ is defined by
\begin{equation}
c_{\la,\hbar,\vep} \coloneqq \paren*{\int_{(-\frac{1}{2},\frac{1}{2})^d} \rho_{\hbar,\vep}^0(x)\chi_{\la}^2(x)dx}^{-1}.
\end{equation}
Set $R_{\la,\hbar,\vep,N}^0 \coloneqq (R_{\la,\hbar,\vep}^0)^{\otimes N}$. For simplicity, we suppose that $\rho_{\hbar,\vep}^0$ is $C^\infty$ and strictly positive everywhere, so that $R_{\la,\hbar,\vep,N}^0$ satisfies the regularity condition \eqref{eq:iddmreg}.

If we choose $\la = \la(\hbar,\vep)$ and assume that $\hbar\|\nabla(\sqrt{\rho_{\hbar,\vep}^0}\chi_{\la})\|_{L^2}\xrightarrow[\hbar+\vep\rightarrow 0^+]{} 0$, then one can check from the chain rule that
\begin{equation}\label{eq:imekelim}
\lim_{\vep+\hbar\rightarrow 0^+} \Tr_{\H}\paren*{\sqrt{R_{\la,\hbar,\vep}^0}|\hbar D-u^0(X)|^2 \sqrt{R_{\la,\hbar,\vep}^0}} = 0, \\
\end{equation}
which takes care of the kinetic portion of $\G_{\hbar,\vep,N}(R_{\la,\hbar,\vep,N}^0,u^0)$.

For the potential energy term, we use \cref{rem:pMEsym} to write
\begin{align}
\frac{1}{\vep^2}\int_{(\T^d)^N}\Fr_N(\ux_N,1+\vep^2\Uu^0)d\rho_{\la,\hbar,\vep,N}^0(\ux_N) = \frac{(N-1)}{N\vep^2}\int_{(\T^d)^2}V(x_1-x_2)d(c_{\la,\hbar,\vep}\rho_{\hbar,\vep}^0\chi_\la^2)^{\otimes 2}(x_1,x_2) \nn\\
-\frac{2}{\vep^2}\int_{\T^d} (-\D)^{-1}\paren*{1+\vep^2\Uu^0}(x_1)d(c_{\la,\hbar,\vep}\rho_{\hbar,\vep}^0\chi_\la^2)(x_1) + \frac{1}{\vep^2}\int_{(\T^d)^2}V(x-y)d\paren*{1+\vep^2\Uu^0}^{\otimes 2}(x,y) \nn\\
=-\frac{1}{N\vep^2}\|c_{\la,\hbar,\vep}\rho_{\hbar,\vep}^0\chi_\la^2 - 1\|_{\dot{H}^{-1}}^2 + \frac{1}{\vep^2}\left\|c_{\la,\hbar,\vep}\rho_{\hbar,\vep}^0\chi_{\la}^2 - 1 - \vep^2\Uu^0\right\|_{\dot{H}^{-1}}^2.
\end{align}
If $\sup_{\hbar,\vep>0} \|\rho_{\hbar,\vep}^0\|_{L^\infty} < \infty$ (an $L^p$ condition for high enough $p$ would suffice), then \\ $\sup_{\hbar,\vep>0} \|\rho_{\hbar,\vep}^0-1\|_{\dot{H}^{-1}}<\infty$. Note that the $\dot{H}^{-1}$ norm is well-defined here and above since the zero-order Fourier modes of the arguments vanish. Also, we can replace $c_{\la,\hbar,\vep}\chi_{\la}^2$ by $1$ above up to an error vanishing as $\la\rightarrow 0^+$. Assuming that $N\vep^2\xrightarrow[\frac{1}{N}+\vep\rightarrow 0^+]{} \infty$, which is more general than our assumed relation \eqref{eq:eprate}, and that
\begin{equation}
\frac{1}{\vep^2}\left\|\rho_{\hbar,\vep}^0 - 1 - \vep^2\Uu^0\right\|_{\dot{H}^{-1}}^2 \xrightarrow[\hbar+\vep\rightarrow 0^+]{} 0,
\end{equation}
it follows that
\begin{equation}\label{eq:imepelim}
\lim_{\frac{1}{N}+\hbar+\vep\rightarrow 0^+} \frac{1}{\vep^2}\int_{(\T^d)^N}\Fr_N(\ux_N,1+\vep^2\Uu^0)d\rho_{\la,\hbar,\vep,N}^0(\ux_N) = 0.
\end{equation}

Together, the limits \eqref{eq:imekelim} and \eqref{eq:imepelim} show that $\G_{\hbar,\vep,N}(R_{\la,\hbar,\vep,N}^0, u^0)$ vanishes as $\hbar+\vep+\frac{1}{N}\rightarrow 0^+$.
\end{remark}

\begin{remark}\label{rem:quant}
The quantitative convergence of \eqref{eq:thmmain} actually implies convergence of the k-particle density $\rho_{\hbar,\vep,N:k}^t$ and current $J_{\hbar,\vep,N:k}^t$ in a tensor product norm of negative-order fractional Sobolev spaces. We have not included this convergence in the statement of \cref{thm:main} so as to simplify the presentation; but we work out the details in \cref{ssec:Mpfmain} and, in fact, deduce the weak-* convergence of measures from this norm convergence. Note that compared to the convergence assertions in \cite{GP2021}, our quantitative norm convergence is new.
\end{remark}

\begin{remark}\label{rem:reg}
It is well-known that given $C^{1,\al}$ initial data, the incompressible Euler equation \eqref{eq:Eul} has a unique $C^{1,\al}$ solution. For instance, see \cite[Sections 7.1-7.2]{BCD2011}. If $d=2$, then the solution is global; while if $d=3$, then only local existence is known. It is an interesting question whether one can relax the $C^{1,\al}$ assumption. From our commentary in \cref{ssec:introcomm}, we need $\nabla u^t\in L^\infty$. Given the recent work \cite{Rosenzweig2020pvmf} by the author on the point-vortex approximation for 2D Euler with log-Lipschitz velocity, we are hopeful that one can relax the regularity assumption if $d=2$.

The regularity assumption \eqref{eq:iddmreg} on the density matrices is taken from \cite{GP2021} and is a technical condition to ensure that certain computations are justified. As noted in the previous subsection, the condition implies that $\|H_{\hbar,\vep,N}\sqrt{R_{\hbar,\vep,N}^t}\|_2 < \infty$ for all $t\in\R$. We make no claim as to its optimality.
\end{remark}

\begin{remark}\label{rem:Wig}
In addition to the convergence assertion \eqref{eq:thmmaincnv} for the density and current, one can---with more work---prove a convergence result for the Wigner function of $R_{\hbar,\vep,N:k}^t$. For instance, one could show that the Wigner function $W_{\hbar,\vep,N:1}^t$ of $R_{\hbar,\vep,N:1}^t$ converges weakly as $\hbar+\vep+\frac{1}{N}\rightarrow 0^+$, in our imposed scaling regime, to the measure
\begin{equation}
\int_{\T^d\times\R^d}\psi(x,\xi)d\d(\xi-u^t(x))(\xi)dx \coloneqq \int_{\T^d}\psi(x,u^t(x))dx, \qquad \psi \in C(\T^d\times\R^d).
\end{equation}
We refer to \cite[Subsection 4.4]{GP2021} for guidance on how to show this.
\end{remark}

\begin{remark}\label{rem:highd}
We have only considered the physically relevant cases $d=2,3$. For $d>3$, one could, in principle, carry out the same proof and obtain the same result as \cref{thm:main}. Some care might be needed when $d$ is very large since the weakness of Sobolev embedding in higher dimensions means that the potential term in the Hamiltonian $H_{\hbar,\vep,N}$ cannot be treated as a perturbation of the Laplacian to show that the operator is self-adjoint on $H^2((\T^d)^N)$. 
\end{remark}

To state our second result, we introduce the 1-particle quantum modulated energy
\begin{equation}\label{eq:ME1def}
\G_{\hbar,\vep}(R_{\hbar,\vep}^t,u^t)\coloneqq \Tr_{\H}\paren*{\sqrt{R_{\hbar,\vep}^t}|\hbar D-u^t(X)|^2\sqrt{R_{\hbar,\vep}^t}} +\frac{1}{\vep^2} \left\|\rho_{\hbar,\vep}^t-1-\vep^2\Uu^t\right\|_{\dot{H}^{-1}}^2.
\end{equation}
As in \cref{rem:ID}, the $\dot{H}^{-1}$ norm is well-defined since the order-zero Fourier mode of its argument vanishes.

\medskip
Our second theorem (cf. \cite[Theorem 1.2, Proposition 3.1]{Puel2002}, \cite[Proposition 2.4]{GP2021}) shows that in the limit as $\hbar+\vep\rightarrow 0^+$ with independent rates, the density of the nonlinear Hartree solution converges to the uniform distribution while the current converges to a solution of the incompressible Euler equation. Again, the convergence is valid on an interval for which the incompressible Euler equation admits a classical solution. 

\begin{thm}\label{thm:mainH}
Let $d\in\{2,3\}$, $0<\al\leq 1$. Suppose that $T>0$ is such that $u\in L^\infty([0,T]; C^{1,\al}(\T^d))$ is a solution to the incompressible Euler equation \eqref{eq:Eul}. Given $\hbar,\vep>0$, let $R_{\hbar,\vep}^{0} \in \Dc(\H)$ satisfy the regularity condition \eqref{eq:iddmregH}. Let $R_{\hbar,\vep}$ denote the solution of the nonlinear Hartree equation \eqref{eq:Har} in $C(\R; \Dc(\H))$ with initial datum $R_{\hbar,\vep}^0$. Let $\rho_{\hbar,\vep}$ and $J_{\hbar,\vep}$ respectively denote the density and current of $R_{\hbar,\vep}$.

There exist constants $C_d, C_{d,\al}>0$ depending only on $d$ and $d,\al$, respectively, such that for every $0\leq t\leq T$,
\begin{equation}\label{eq:thmmainH}
\G_{\hbar,\vep}(R_{\hbar,\vep}^t,u^t) \leq \paren*{\G_{\hbar,\vep}(R_{\hbar,\vep}^0, u^0) + C_{d,\al}\vep^2\int_0^t \|u^\tau\|_{C^{1,\al}}^6d\tau} e^{C_d \int_0^t(1+ \|\nabla u^\tau\|_{L^\infty})d\tau}.
\end{equation}
Consequently, if $\{R_{\hbar,\vep}^0\}_{\hbar,\vep}$ is a sequence of initial data such that
\begin{equation}
\G_{\hbar,\vep}(R_{\hbar,\vep}^0, u^0) \xrightarrow[\hbar+\vep\rightarrow 0^+]{} 0,
\end{equation}
provided that $\vep=\vep(N)$ is chosen so that the rate \eqref{eq:eprate} holds, then for $1\leq \al\leq d$,
\begin{equation}\label{eq:thmmainHcnv}
\rho_{\hbar,\vep}^t \xrightharpoonup[\hbar +\vep\rightarrow 0^+]{} 1 \qquad \text{and} \qquad (J_{\hbar,\vep}^t)^{\al} \xrightharpoonup[\hbar+\vep\rightarrow 0^+]{} (u^t)^{\al}
\end{equation}
in the weak-* topology for the space $\M(\T^d)$ of signed Radon measures.
\end{thm}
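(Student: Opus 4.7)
The strategy mirrors that of \cref{thm:main}, but in the 1-particle mean-field setting, where $\rho_{\hbar,\vep}^t$ is a smooth bounded density rather than an empirical measure. This is a substantial simplification: the ``commutator'' estimate needed to close the Gronwall argument requires no excision of the diagonal, and consequently the $N^{-2/d}\vep^{-2}$ source term that appears in \eqref{eq:thmmain} disappears entirely, leaving only the $\vep^2 \|u^t\|_{C^{1,\al}}^6$ correction coming from the corrector's time derivative.

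My plan is to differentiate $\G_{\hbar,\vep}(R_{\hbar,\vep}^t, u^t)$ in time. For the kinetic portion, I would expand $|\hbar D - u^t(X)|^2 = -\hbar^2\Delta + |u^t|^2(X) - \hbar\,(u^t(X)\vee D)$ and apply the Hartree equation $i\hbar\partial_t R_{\hbar,\vep}^t = \comm{H_{\hbar,\vep}^t}{R_{\hbar,\vep}^t}$ together with $\partial_t u^t = -u^t\cdot\nabla u^t - \nabla p^t$ from \eqref{eq:Eul}. The commutator of the kinetic Hamiltonian with $R_{\hbar,\vep}^t$, combined with the time derivative of the cross term involving $u^t$, produces an anticommutator identity mimicking the classical Heisenberg relation directly analogous to \cite[Section 3]{GP2021}; crucially, the commutator with the interaction term $\frac{1}{\vep^2}(V*\rho_{\hbar,\vep}^t)(X)$ produces the mean-field force contribution
\begin{equation*}
\frac{2}{\vep^2}\int_{(\T^d)^2} u^t(x)\cdot \nabla V(x-y)\, d\rho_{\hbar,\vep}^t(x)\, d\rho_{\hbar,\vep}^t(y).
\end{equation*}

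For the potential portion $\frac{1}{\vep^2}\|\rho_{\hbar,\vep}^t - 1 - \vep^2\Uu^t\|_{\dot{H}^{-1}}^2$, I would differentiate using the continuity equation $\partial_t\rho_{\hbar,\vep}^t + \div J_{\hbar,\vep}^t = 0$ and the pressure relation $-\Delta p^t = \Uu^t$. After integration by parts, the $\nabla p^t$ contribution coming from $\partial_t u^t$ in the kinetic computation cancels precisely against the leading $\div J_{\hbar,\vep}^t$ contribution from the potential side---this is the structural purpose of the corrector, mirroring the cancellation in \cref{thm:main}. What remains of the interaction is the antisymmetrized expression
\begin{equation*}
\frac{1}{\vep^2}\int_{(\T^d)^2} (u^t(x) - u^t(y))\cdot \nabla V(x-y)\, d(\rho_{\hbar,\vep}^t - 1 - \vep^2\Uu^t)^{\otimes 2}(x,y),
\end{equation*}
plus a source term generated by $\vep^2\partial_t\Uu^t$ which, after substituting Euler to express $\partial_t u$ in terms of $u$ and $p$, is bounded by $C_{d,\al}\vep^2\|u^t\|_{C^{1,\al}}^6$. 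The standard stress-tensor / Littlewood--Paley estimate of \cite{Serfaty2020, NRS2021} controls the antisymmetrized integral by $\|\nabla u^t\|_{L^\infty}\cdot \frac{1}{\vep^2}\|\rho_{\hbar,\vep}^t - 1 - \vep^2\Uu^t\|_{\dot{H}^{-1}}^2$, and since both factors of the product measure are bounded densities, no diagonal renormalization is needed.

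Combining this with the $O(\|\nabla u^t\|_{L^\infty})$ bound on the remaining kinetic contribution generated by $u^t\cdot\nabla u^t$, Gronwall's lemma yields \eqref{eq:thmmainH}. The weak-$*$ convergence \eqref{eq:thmmainHcnv} then follows along the lines of \cref{rem:quant}: the modulated energy controls $\rho_{\hbar,\vep}^t-1-\vep^2\Uu^t$ in $\dot{H}^{-1}$ and $J_{\hbar,\vep}^t-\rho_{\hbar,\vep}^t u^t$ in an analogous negative-order norm, from which testing against $C^\infty$ functions recovers the claimed weak convergence. The main technical obstacle is a rigorous justification of the operator-valued chain rule and trace-cyclicity manipulations under the regularity hypothesis \eqref{eq:iddmregH}; this follows the template of \cite[Section 3]{GP2021} and is shorter here because the interaction is simply multiplication by the smooth (in fact uniformly $W^{1,\infty}$) function $V*\rho_{\hbar,\vep}^t$.
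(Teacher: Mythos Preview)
Your proposal is correct and would yield \eqref{eq:thmmainH}, but it takes a genuinely different route from the paper. The paper does not differentiate the 1-particle functional $\G_{\hbar,\vep}$ directly. Instead, in \cref{ssec:MpfH} it forms the $N$-fold tensor product $(R_{\hbar,\vep}^t)^{\otimes N}$, notes that this solves an $N$-body von Neumann equation for the mean-field Hamiltonian $\sum_j \Ib^{\otimes j-1}\otimes H_{\hbar,\vep}^t\otimes\Ib^{\otimes N-j}$, and then reruns the computation of \cref{prop:MEtd} for $\G_{\hbar,\vep,N}\bigl((R_{\hbar,\vep}^t)^{\otimes N},u^t\bigr)$ with a single modification to the interaction term (producing an extra $O(N^{-1}\vep^{-2})$ contribution). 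It then applies \cref{prop:coer} and \cref{prop:comm} verbatim as in \cref{ssec:Mpfgron}, integrates in time, and finally sends $N\to\infty$ so that all $N$-dependent error terms vanish and the $N$-body functional collapses to $\G_{\hbar,\vep}$ via \eqref{eq:peidH}, \eqref{eq:GlimNH}. Your direct approach is more transparent and avoids the auxiliary $N\to\infty$ limit and the renormalized commutator estimate; the paper's approach buys maximal reuse of the $N$-body machinery already built for \cref{thm:main}, at the cost of tracking and then discarding $N$-dependent terms. One minor caution on your sketch: when you rearrange the interaction into the antisymmetrized integral against $(\rho_{\hbar,\vep}^t-1-\vep^2\Uu^t)^{\otimes 2}$, a second linear source term of the form $\int_{\T^d}\div(-\Delta)^{-1}(u^t\Uu^t)\,(\rho_{\hbar,\vep}^t-1-\vep^2\Uu^t)\,dx$ appears alongside the $\partial_t p^t$ term you mention; both are handled by the same $ab\leq \vep^2 a^2+\vep^{-2}b^2$ splitting and contribute to the same $C_{d,\al}\vep^2\|u^t\|_{C^{1,\al}}^6$ bound, so your conclusion is unaffected.
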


\Cref{rem:quant,rem:reg,rem:Wig,rem:highd} all have their analogues for the 1-particle setting of \cref{thm:mainH}, which we will not state below. We briefly mention that the 1-particle quantum modulated energy estimate \eqref{eq:thmmainH} implies norm convergence of the density $\rho_{\hbar,\vep}^t$ to $1$ and the current $J_{\hbar,\vep}^t$ to $u^t$ in a negative-order fractional Sobolev space, which we show in \cref{ssec:MpfH}.

\section{Main proof}\label{sec:Mpf}
In this section, we prove \Cref{thm:main,thm:mainH}. The primary objective is to prove the Gronwall-type estimate \eqref{eq:thmmain} (cf. \cite[Proposition 3.7]{GP2021}, \cite[Equation (2.11)]{HkI2021}, \cite[Theorem 1.1]{Rosenzweig2021ne}) for the functional $\G_{\hbar,\vep,N}(R_{\hbar,\vep,N}^t,u^t)$ in the statement of \cref{thm:main}. The proof of the estimate \eqref{eq:thmmain} is spread out over \Cref{ssec:Mpftd,ssec:Mpffi,ssec:Mpfgron}. We then show in \cref{ssec:Mpfmain} that \eqref{eq:thmmain} implies norm convergence of the k-particle density and current in a certain norm on the k-fold tensor product of negative-order fractional Sobolev spaces, as claimed \cref{rem:quant}. From this Sobolev convergence, we deduce the weak-* convergence \eqref{eq:thmmaincnv}, completing the proof of \cref{thm:main}. Finally, we obtain in \cref{ssec:MpfH} the Gronwall-type estimate \eqref{eq:thmmainH} in the statement of \cref{thm:mainH} from the estimate \eqref{eq:thmmain}, and then use \eqref{eq:thmmainH} to show a similar Sobolev convergence assertion, along with the stated weak-* convergence \eqref{eq:thmmainHcnv}.

\subsection{Time derivative}\label{ssec:Mpftd}
We first use the incompressible Euler and $N$-body von Neumann equations \eqref{eq:Eul} and \eqref{eq:VN}, respectively, to compute the time derivative of the functional $\G_{\hbar,\vep,N}(R_{\hbar,\vep,N}^t,u^t)$. The main result is \cref{prop:MEtd} below (cf. \cite[Proposition 3.4]{GP2021}). We shall proceed formally, as this step is a variation on the analysis of Golse and Paul \cite[Section 3]{GP2021}, who carefully included all the necessary approximation steps so that the computation is justified. Although they only considered the case $\R^3$, their arguments may be adapted to treat $\T^d$, for $d\in \{2,3\}$. We leave the details as an exercise for the reader.

\begin{prop}\label{prop:MEtd}
The function $t\mapsto \G_{\hbar,\vep,N}(R_{\hbar,\vep,N}^t,u^t)$ is $C^1([0,T])$ and for $t\in [0,T]$, it holds that
\begin{multline}\label{eq:MEtd}
\frac{d}{dt}\G_{\hbar,\vep,N}(R_{\hbar,\vep,N,}^t,u^t) =-2\Tr_{\H}\paren*{\sqrt{R_{\hbar,\vep, N:1}^t}(\hbar D-u^t(X))\Sigma^t(X) (\hbar D-u^t(X)) \sqrt{R_{\hbar,\vep, N:1}^t}} \\
+\frac{1}{\vep^2}\int_{(\T^d)^N}\int_{(\T^d)^2\setminus\triangle} \paren*{u^t(x)-u^t(y)}\cdot\nabla V(x-y)d\paren*{\frac{1}{N}\sum_{j=1}^N\d_{x_j}-1-\vep^2\Uu^t}^{\otimes 2}(x,y)d\rho_{\hbar,\vep,N}^t(\ux_N)\\
- 2\int_{(\T^d)^N}\int_{\T^d}\div(-\D)^{-1}(u^t\Uu^t)x)d\paren*{\frac{1}{N}\sum_{j=1}^N\d_{x_j}-1-\vep^2\Uu^t}(x)d\rho_{\hbar,\vep,N}^t(\ux_N)\\
-2\int_{(\T^d)^N}\int_{\T^d}\p_t p^t(x)d\paren*{\frac{1}{N}\sum_{j=1}^N \d_{x_j}-1-\vep^2\Uu^t}(x)d\rho_{\hbar,\vep,N}^t(\ux_N),
\end{multline}
where $\Sigma^t(X)$ is the $d\times d$ matrix of operators whose entries are the multiplication operators $\Sigma_{\al\be}^t(X) \coloneqq \frac{1}{2}(\p_{x_\al} (u^t)^\be + \p_{x_\be} (u^t)^\al)(X)$.
\end{prop}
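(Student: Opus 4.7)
The plan is to decompose $\G_{\hbar,\vep,N}(R_{\hbar,\vep,N}^t,u^t) = K(t) + P(t)$ into its kinetic and potential pieces and differentiate each in time, using the von Neumann equation \eqref{eq:VN} for $\p_t R_{\hbar,\vep,N}^t$ and the incompressible Euler equation \eqref{eq:Eul} for $\p_t u^t$ (and, by extension, for $\p_t \nabla u^t$ and $\p_t p^t$). I proceed formally; justifying the exchange of trace and derivative, the Schatten-class approximations, and the absolute convergence of every integral under hypothesis \eqref{eq:iddmreg} parallels \cite[Section 3]{GP2021} and is what the statement defers to the reader.

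For the kinetic term, bosonic symmetry reduces $K(t)$ to $\Tr_{\H}(\sqrt{R_{\hbar,\vep,N:1}^t}|A^t|^2\sqrt{R_{\hbar,\vep,N:1}^t})$ with $A^t \coloneqq \hbar D - u^t(X)$. Its time derivative splits into two families of contributions. The piece from $\p_t R_N = -\tfrac{i}{\hbar}[H_{\hbar,\vep,N}, R_N]$ yields $-\tfrac{i}{\hbar N}\sum_j\Tr(R_N[|A_j^t|^2, H_{\hbar,\vep,N}])$. The commutator with the free Hamiltonian, after expanding $|A_j|^2$ and using $[\hbar D_j^\al, u^\be(X_j)] = -i\hbar(\p_\al u^\be)(X_j)$, yields the symmetric-gradient term $-2\Tr(\sqrt{R_{N:1}}(\hbar D-u)\Sigma^t(X)(\hbar D-u)\sqrt{R_{N:1}})$---the antisymmetric part of $\nabla u^t$ drops out on commuting through $A^t$---together with a ``transport of $u$ by $u$'' remainder earmarked for combination with $\dot P$. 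The commutator with the interaction part, after using bosonic symmetry and the antisymmetry $\nabla V(x-y) = -\nabla V(y-x)$, produces the force expression $\tfrac{1}{N\vep^2}\sum_{k\neq \ell}\Tr(R_N(u^t(X_k)-u^t(X_\ell))\cdot\nabla V(X_k-X_\ell))$. The piece from $\p_t u^t$ is immediately rewritten via Euler as $-u^t\cdot\nabla u^t - \nabla p^t$, generating inertial terms and pressure-gradient terms paired against $A^t$ and $R_{N:1}$.

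For the potential term, use \eqref{rem:pMEsym} to rewrite $P(t)$ as a sum of a two-body integral against $\rho_{\hbar,\vep,N:2}^t$, a one-body integral involving $(-\D)^{-1}(1+\vep^2\Uu^t)$, and a $u$-dependent integral independent of the state. Time-differentiation splits into derivatives of the marginals, governed by BBGKY-type conservation laws inherited from \eqref{eq:VN}, and derivatives of $\Uu^t$. The kinetic piece of the marginal derivatives, after integration by parts, produces exactly the renormalized tensorial commutator on the right-hand side of \eqref{eq:MEtd} together with a contribution of the form $\int \nabla(-\D)^{-1}(1+\vep^2\Uu^t)\cdot J_{N:1}^t$; the interaction piece cancels the force expression from $\dot K$. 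For $\p_t \Uu^t$, differentiate $\Uu^t = \p_\al u^{t,\be}\p_\be u^{t,\al}$, substitute Euler to convert each $\p_t u$ into $-u\cdot\nabla u - \nabla p$, and invoke $-\D p^t = \Uu^t$ to reorganize the output; paired with integration by parts against $(-\D)^{-1}$ and against the background $1+\vep^2\Uu^t$, this yields the $\p_t p^t$ line and the $\div(-\D)^{-1}(u^t\Uu^t)$ line of \eqref{eq:MEtd}.

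The principal obstacle is bookkeeping: every raw empirical-measure factor $\tfrac{1}{N}\sum_j\d_{x_j}$ that appears in the intermediate computations must be upgraded to the modulated measure $\tfrac{1}{N}\sum_j\d_{x_j} - 1 - \vep^2\Uu^t$, which is legitimate because $(u(x)-u(y))\cdot\nabla V(x-y)$ is antisymmetric in $(x,y)$ and because $\p_t p^t$ and $\div(-\D)^{-1}(u^t\Uu^t)$ pair neutrally with the smooth background $1+\vep^2\Uu^t$ (the identity $-\D p = \Uu$ tethering pressure to corrector is what makes this pairing closed). Once these subtractions are correctly assembled, the inertial and ``transport-of-$u$-by-$u$'' remainders complete the symmetric-gradient form $\Sigma^t$; the pressure-gradient and $\p_t\Uu^t$ contributions assemble into the $\p_t p^t$ and $\div(-\D)^{-1}(u^t\Uu^t)$ lines; and the interaction-commutator and $\rho_{N:2}^t$-conservation pieces merge into the renormalized tensorial commutator, giving exactly \eqref{eq:MEtd}. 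A secondary, technical obstacle---handled as in \cite{GP2021} under hypothesis \eqref{eq:iddmreg}---is making rigorous the formal manipulations with unbounded operators inside traces.
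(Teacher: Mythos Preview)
Your overall strategy---split $\G_{\hbar,\vep,N}$ into kinetic and potential parts, differentiate each using the von Neumann and Euler equations, and recombine---is exactly the paper's approach. However, several of your intermediate attributions are misrouted, and as written the pieces would not assemble into \eqref{eq:MEtd}.

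First, the commutator $[|A_j|^2,H_{\mathrm{int}}]$ does \emph{not} produce only the force expression. Writing $A_j=\hbar D_j-u(X_j)$ and using $[A_k^\al,V(X_k-X_\ell)]=-i\hbar(\p_\al V)(X_k-X_\ell)$, one finds
\[
\sum_j[|A_j|^2,V(X_k-X_\ell)]=-i\hbar\bigl((A_k-A_\ell)\vee\nabla V(X_k-X_\ell)\bigr),
\]
which splits into the force expression $(u(X_k)-u(X_\ell))\cdot\nabla V$ \emph{and} a momentum piece $\hbar(D_k-D_\ell)\vee\nabla V$. You have dropped the latter. That momentum piece is precisely what cancels against the term $\frac{N-1}{N\vep^2}\int V\,d\p_t\rho_{\hbar,\vep,N:2}$ coming from $\dot P$ (after using $\p_t\rho_{N:2}+\div J_{N:2}=0$ and integrating by parts); this is the content of energy conservation. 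It is \emph{not} the force expression that cancels here.

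Second, the renormalized commutator line in \eqref{eq:MEtd} does not arise from ``the kinetic piece of the marginal derivatives'' in $\dot P$. It comes from the surviving force expression $\frac{N-1}{N\vep^2}\int(u(x_1)-u(x_2))\cdot\nabla V\,d\rho_{N:2}$ in $\dot K$, which is rewritten via the Han--Kwan--Iacobelli identity (see \eqref{eq:HkIidrho}) as the off-diagonal integral against $(\mu_N-1-\vep^2\Uu^t)^{\otimes2}$ \emph{plus} the $\div(-\D)^{-1}(u^t\Uu^t)$ line \emph{plus} a $-2\int u\cdot\nabla p\,d\rho_{N:1}$ term. In particular, the $\div(-\D)^{-1}(u^t\Uu^t)$ term is not produced by differentiating $\Uu^t$; it is a byproduct of this algebraic identity. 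The $\p_t\Uu^t$ contribution in $\dot P$ yields only the $\p_t p^t$ line, via $(-\D)^{-1}\p_t\Uu^t=\p_t p^t$.

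Third, the crucial corrector cancellation---the reason $\Uu^t$ is introduced---should be stated explicitly: the pressure-gradient term $\Tr((\nabla p^t(X)\vee\hbar D)R_{N:1})$ appearing in $\dot K$ (from $\p_t u=-u\cdot\nabla u-\nabla p$) is exactly cancelled by the term $-\frac{2}{\vep^2}\int\nabla(-\D)^{-1}(1+\vep^2\Uu^t)\cdot dJ_{N:1}$ from $\dot P$, since $\nabla(-\D)^{-1}(1)=0$ and $-\D p=\Uu$; see \eqref{eq:presscan}. Your proposal alludes to this but buries it among other pairings.

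With these corrections---keeping the momentum piece of $[|A|^2,H_{\mathrm{int}}]$, cancelling it against $\int V\,d\p_t\rho_{N:2}$, and invoking \eqref{eq:HkIidrho} to convert the force expression into the renormalized commutator plus the $\div(-\D)^{-1}(u\Uu)$ line---your outline becomes the paper's proof.
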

\begin{proof}[Proof of \cref{prop:MEtd}]
For the potential portion of the modulated energy, we split
\begin{multline}
\int_{(\T^d)^N}\Fr_N(\ux_N, 1+\vep^2\Uu^t)d\rho_{\hbar,\vep,N}^t(\ux_N) = \frac{(N-1)}{N}\int_{(\T^d)^2}V(x_1-x_2)d\rho_{\hbar,\vep,N:2}^t(x_1,x_2) \\
-2\int_{\T^d}(-\D)^{-1}(1+\vep^2\Uu^t)(x_1)d\rho_{\hbar,\vep,N:1}^t(x_1) +\int_{\T^d}(-\D)^{-1}(1+\vep^2\Uu^t)(x)d(1+\vep^2\Uu^t)(x),
\end{multline}
where above we have used \eqref{rem:pMEsym}. By the product rule, it follows that
\begin{multline}
\frac{d}{dt}\int_{(\T^d)^N}\Fr_N(\ux_N,1+\vep^2\Uu^t)d\rho_{\hbar,\vep,N}^t(\ux_N)= \frac{(N-1)}{N}\int_{(\T^d)^2}V(x_1-x_2)d\p_t \rho_{\hbar,\vep,N:2}^t(x_1,x_2) \\
-2\vep^2\int_{\T^d}(-\D)^{-1}(\p_t \Uu^t)(x_1)d\rho_{\hbar,\vep,N:1}^t(x_1) -2\int_{\T^d} \nabla(-\D)^{-1}(1+\vep^2\Uu^t)(x_1)\cdot d J_{\hbar,\vep,N}^t(x).
 \\
+2\vep^2\int_{(\T^d)^2}(-\D)^{-1}(\p_t\Uu^t)(x)d\paren*{1+\vep^2\Uu^t}(x).
\end{multline}
Here, we have used the fact that the density $\rho_{\hbar,\vep,N:1}^t$ of the 1-particle marginal $R_{\hbar,\vep,N:1}^t$ satisfies the equation \eqref{eq:rho}.

For the kinetic portion of the modulated energy, conservation of energy implies
\begin{equation}
\frac{d}{dt}\Tr_{\H}\paren*{\sqrt{R_{\hbar,\vep,N:1}^t}|\hbar D|^2 \sqrt{R_{\hbar,\vep,N:1}^t}}  =-\frac{(N-1)}{N\vep^2}\int_{(\T^d)^2}V(x_1-x_2)d\p_t\rho_{\hbar,\vep,N:2}^t(x_1,x_2).
\end{equation}
Using equation \eqref{eq:Eul} and \cite[Lemma 3.3]{GP2021}, we see that
\begin{multline}
\frac{d}{dt} \Tr_{\H}\paren*{\sqrt{R_{\hbar,\vep,N:1}^t}|u^t(X)|^2\sqrt{R_{\hbar,\vep,N:1}^t}} = -2\int_{\T^d} u^t(x_1)\cdot(u^t\cdot\nabla u^t)(x_1)d\rho_{\hbar,\vep,N:1}^t(x_1) \\
-2\int_{\T^d}u^t(x_1)\cdot\nabla p^t(x_1)d\rho_{\hbar,\vep,N:1}^t(x_1)  + 2\int_{\T^d}\nabla|u^t(x_1)|^2\cdot d J_{\hbar,\vep,N:1}^t(x_1)(x_1).
\end{multline}
For the cross-term computation, we extend the anticommutator notation $A\vee B \coloneqq AB+BA$ to vectors of operators $A=(A_j), B=(B_j)$, by $A\vee B\coloneqq A\cdot B - B\cdot A$. Using equation \eqref{eq:Eul} and \cite[Lemma 3.2]{GP2021},
\begin{multline}\label{eq:Hrep}
\frac{d}{dt}\Tr_{\H}\paren*{\sqrt{R_{\hbar,\vep,N:1}^t}(u^t(X)\vee \hbar D)\sqrt{R_{\hbar,\vep,N:1}^t}}\\
=-\frac{(N-1)}{\vep^2 N}\int_{(\T^d)^2} \paren*{u^t(x_1)-u^t(x_2)}\cdot \nabla V(x_1-x_2)d\rho_{\hbar,\vep,N:2}^t(x_1,x_2) \\
+ \Tr_{\H}\paren*{ \paren*{\paren*{\frac{\hbar}{2} D\vee \nabla u^t(X) - (u^t\cdot\nabla u^t)(X) - \nabla p^t(X)}\vee\hbar D}R_{\hbar,\vep,N:1}^t}.
\end{multline}
Lest there be any confusion, $(D\vee \nabla u^t(X))^{\al} = (D)^{\be}\vee\p_{x_{\be}} (u^t)^\al(X)$, where the summation over $\be$ is implicit.

Putting together all of the above computations, we arrive at
\begin{multline}\label{eq:dtGpre}
\frac{d}{dt}\G_{\hbar,\vep,N}(R_{\hbar,\vep,N}^t,u^t)= \frac{(N-1)}{\vep^2 N}\int_{(\T^d)^2} \paren*{u^t(x_1)-u^t(x_2)}\cdot \nabla V(x_1-x_2)d\rho_{\hbar,\vep,N:2}^t(x_1,x_2)\\
- \Tr_{\H}\paren*{\paren*{\paren*{\paren*{\frac{\hbar}{2} D\vee \nabla u^t(X)} - (u^t\cdot\nabla u^t)(X) - \nabla p^t(X)}\vee\hbar D}R_{\hbar,\vep,N:1}^t} \\
-2\int_{\T^d} u^t(x_1)\cdot(u^t\cdot\nabla u)^t(x_1)d\rho_{\hbar,\vep,N:1}^t(x_1) -2\int_{\T^d}u^t(x_1)\cdot\nabla p^t(x_1)d\rho_{\hbar,\vep,N:1}^t(x_1) \\
+ 2\int_{\T^d}\nabla|u^t(x_1)|^2\cdot dJ_{\hbar,\vep,N:1}^t(x_1)(x_1) -2\int_{\T^d}(-\D)^{-1}(\p_t \Uu^t)(x_1)d\rho_{\hbar,\vep,N:1}^t(x_1)\\
-\frac{2}{\vep^{2}}\int_{\T^d} \nabla(-\D)^{-1}(1+\vep^2\Uu^t)(x_1)\cdot dJ_{\hbar,\vep,N:1}^t(x_1) +2\int_{\T^d}(-\D)^{-1}(\p_t\Uu^t)(x)d(1+\vep^2\Uu^t)(x).
\end{multline}
Since $\rho_{\hbar,\vep,N}$ is a probability density and using its symmetry with respect to permutation of particle labels, we see that
\begin{multline}
-2\int_{\T^d} (-\D)^{-1}(\p_t\Uu^t)(x_1)d\rho_{\hbar,\vep,N:1}^t(x_1) + 2\int_{\T^d}(-\D)^{-1}(\p_t\Uu^t)(x)d\paren*{1+\vep^2\Uu^t}(x) \\
=-2\int_{(\T^d)^N}\int_{\T^d}\p_t\Uu^t(x)d\paren*{\frac{1}{N}\sum_{i=1}^N \d_{x_i}-1-\vep^2\Uu^t}(x)d\rho_{\hbar,\vep,N}^t(\ux_N).
\end{multline}
Note that since $-\D p = \Uu$, inverting the Laplacian, the preceding expression equals
\begin{equation}\label{eq:pressid}
-2\int_{(\T^d)^N}\int_{\T^d}\p_t p^t(x)d\paren*{\frac{1}{N}\sum_{i=1}^N \d_{x_i}-1-\vep^2\Uu^t}(x)d\rho_{\hbar,\vep,N}^t(\ux_N).
\end{equation}
By definition of $\rho_{\hbar,\vep,N:1}$ and the cyclicity of trace, we also have
\begin{multline}
-2\int_{\T^d}u^t(x_1)\cdot(u^t\cdot\nabla u^t)(x_1)d\rho_{\hbar,\vep,N:1}^t(x_1)\\
= -2\Tr_{\H}\paren*{\sqrt{R_{\hbar,\vep, N:1}^t} \paren*{u^t\cdot (u^t\cdot\nabla u^t)}(X)\sqrt{R_{\hbar,\vep, N:1}^t}}.
\end{multline}
By definition \eqref{eq:currdef} of the current, we also have the identities
\begin{align}
\Tr_{\H}\paren*{(\nabla p^t(X)\vee\hbar D)R_{\hbar,\vep,N:1}^t} = 2\int_{\T^d} \nabla p^t(x)\cdot dJ_{\hbar,\vep,N}^t(x), \label{eq:tdpid}\\
\Tr_{\H}\paren*{(\nabla|u^t|^2(X)\vee \hbar D)R_{\hbar,\vep, N:1}^t} = 2\int_{\T^d}\nabla|u(t,x)|^2\cdot dJ_{\hbar,\vep,N}^t(x).
\end{align}
Since trivially $\nabla(-\D)^{-1}(1) = 0$ and $- \D p = \Uu$, it follows from \eqref{eq:tdpid} that
\begin{equation}\label{eq:presscan}
\Tr_{\H}\paren*{(\nabla p^t(X)\vee\hbar D)R_{\hbar,\vep,N:1}^t} -\frac{2}{\vep^2}\int_{\T^d} \nabla(-\D)^{-1}(1+\vep^2\Uu^t)(x_1)\cdot dJ_{\hbar,\vep,N:1}^t(x_1)=0,
\end{equation}
which is the key cancellation provided by the corrector $\Uu$. Next, we again use the definition of $\rho_{\hbar,\vep,N:1}$ and cyclicity of trace to obtain
\begin{multline}
\Tr_{\H}\paren*{ \paren*{\paren*{\frac{\hbar}{2} D\vee \nabla u^t(X) - (u^t\cdot\nabla u^t)(X)}\vee\hbar D}R_{\hbar,\vep,N:1}^t} \\
=\Tr_{\H}\paren*{\sqrt{R_{\hbar,\vep,N:1}^t}\paren*{\paren*{\frac{\hbar}{2} D\vee \nabla u^t(X) - (u^t\cdot\nabla u^t)(X)}\vee\hbar D}\sqrt{R_{\hbar,\vep,N:1}^t}}
\end{multline}
and
\begin{equation}
\Tr_{\H}\paren*{(\nabla|u^t|^2(X)\vee \hbar D)R_{\hbar,\vep, N:1}^t} = \Tr_{\H}\paren*{\sqrt{R_{\hbar,\vep,N:1}^t}(\nabla|u^t|^2(X)\vee \hbar D)\sqrt{R_{\hbar,\vep,N:1}^t}}.
\end{equation}
As shown in the proof of \cite[Proposition 3.4, pg. 19]{GP2021} (note that $u$ is divergence-free in our setting), we also have the identity
\begin{multline}\label{eq:GPidke}
-\Tr_{\H}\Bigg(\sqrt{R_{\hbar,\vep, N:1}^t}\Bigg(\paren*{\frac{\hbar}{2} D\vee \nabla u^t(X) - (u^t\cdot\nabla)u^t(X)-\nabla|u^t|^2(X)}\vee\hbar D \\
+2\paren*{u^t\cdot (u^t\cdot\nabla u^t)}(X)\Bigg)\sqrt{R_{\hbar,\vep, N:1}^t}\Bigg) \\
=-2\Tr_{\H}\paren*{\sqrt{R_{\hbar,\vep, N:1}^t}\paren*{\hbar D-u^t(X)}\Sigma^t(X)\paren*{\hbar D-u^t(X)}\sqrt{R_{\hbar,\vep, N:1}^t}},
\end{multline}
where we have defined
\begin{equation}
\begin{split}
(\hbar D-u^t(X))\Sigma^t(X) (\hbar D-u^t(X)) \coloneqq (\hbar D-u^t(X))^\al\Sigma_{\al\be}^t(X) (\hbar D-u^t(X))^\be,\\
\Sigma_{\al\be}^t(X) \coloneqq \frac{1}{2}(\p_{x_\al} (u^t)^{\be} + \p_{x_\be} (u^t)^{\al})(X).
\end{split}
\end{equation}

As originally shown in \cite[Section 2]{HkI2021} (see the equation after (2.4)), we have the identity
\begin{multline}
\frac{1}{2\vep^2}\int_{(\T^d)^2\setminus\triangle} \paren*{u^t(x)-u^t(y)}\cdot\nabla V(x-y)d\paren*{\frac{1}{N}\sum_{i=1}^N\d_{x_i}-1-\vep^2\Uu^t}^{\otimes 2}(x,y) \\
= \frac{1}{2\vep^2N^2}\sum_{1\leq i\neq j\leq N} \paren*{u^t(x_i)-u^t(x_j)}\cdot\nabla V(x_i-x_j) - \frac{1}{N}\sum_{i=1}^N u^t(x_i)\cdot\nabla p^t(x_i) \\
+\int_{\T^d}\div(-\D)^{-1}(u^t\Uu^t)(x)d\paren*{\frac{1}{N}\sum_{i=1}^N\d_{x_i}-1-\vep^2\Uu^t}(x).
\end{multline}
Regarding $\ux_N$ as a random variable on $(\T^d)^N$ with law $\rho_{\hbar,\vep,N}^t$, we see from taking expectations of both sides of the preceding equality and using symmetry of $\rho_{\hbar,\vep,N}^t$ with respect to permutation of particle labels that
\begin{multline}\label{eq:HkIidrho}
\frac{1}{\vep^2}\int_{(\T^d)^N}\int_{(\T^d)^2\setminus\triangle} \paren*{u^t(x)-u^t(y)}\cdot\nabla V(x-y)d\paren*{\frac{1}{N}\sum_{i=1}^N\d_{x_i}-1-\vep^2\Uu^t}^{\otimes 2}(x,y)d\rho_{\hbar,\vep,N}^t(\ux_N)\\
=\frac{(N-1)}{\vep^2 N}\int_{(\T^d)^2}\paren*{u^t(x_1)-u^t(x_2)}\cdot\nabla V(x_1-x_2)d\rho_{\hbar,\vep,N:2}^t(x_1,x_2) \\
+ 2\int_{(\T^d)^N}\int_{\T^d}\div(-\D)^{-1}(u^t\Uu^t)(x)d\paren*{\frac{1}{N}\sum_{i=1}^N\d_{x_i}-1-\vep^2\Uu^t}(x)d\rho_{\hbar,\vep,N}^t(\ux_N) \\
-2\int_{\T^d} u^t(x_1)\cdot\nabla p^t(x_1)d\rho_{\hbar,\vep,N:1}^t(x_1).
\end{multline}

Applying the identities \eqref{eq:pressid},\eqref{eq:GPidke}, \eqref{eq:HkIidrho} together with the cancellation \eqref{eq:presscan} to the right-hand side of equality \eqref{eq:dtGpre}, we arrive at
\begin{multline}
\frac{d}{dt}\G_{\hbar,\vep,N}(R_{\hbar,\vep, N}^t,u^t) =-2\Tr_{\H}\paren*{\sqrt{R_{\hbar, \vep,N:1}^t}\paren*{\hbar D-u^t(X)}\Sigma^t(X)\paren*{\hbar D-u^t(X)} \sqrt{R_{\hbar, \vep,N:1}^t}} \\
+\frac{1}{\vep^2}\int_{(\T^d)^N}\int_{(\T^d)^2\setminus\triangle} \paren*{u^t(x)-u^t(y)}\cdot\nabla V(x-y)d\paren*{\frac{1}{N}\sum_{i=1}^N\d_{x_i}-1-\vep^2\Uu^t}^{\otimes 2}(x,y)d\rho_{\hbar,\vep,N}^t(\ux_N)\\
- 2\int_{(\T^d)^N}\int_{\T^d}\div(-\D)^{-1}(u^t\Uu^t)x)d\paren*{\frac{1}{N}\sum_{i=1}^N\d_{x_i}-1-\vep^2\Uu^t}(x)d\rho_{\hbar,\vep,N}^t(\ux_N) \\
-2\int_{(\T^d)^N}\int_{\T^d}\p_t p^t(x)d\paren*{\frac{1}{N}\sum_{i=1}^N \d_{x_i}-1-\vep^2\Uu^t}(x)d\rho_{\hbar,\vep,N}^t(\ux_N),
\end{multline}
which is exactly what we needed to show.
\end{proof}

\subsection{Functional inequalities}\label{ssec:Mpffi}
To bound the right-hand side of identity \eqref{eq:MEtd}, we need to recall some functional inequalities for the classical modulated energy of Serfaty. The first inequality, which we shall use to bound the last two terms in the right-hand side of \eqref{eq:MEtd}, essentially shows that the modulated potential energy $\Fr_N$ is coercive. Such estimates are by now well-known. The version we state below is from \cite[Proposition 3.8]{Rosenzweig2021ne}, which is a variation on an earlier estimate \cite[Proposition 3.6]{Serfaty2020}.

\begin{prop} \label{prop:coer}
There exists a constant $C_d>0$ such that for any functions $\phi\in W^{1,\infty}(\T^d)$ and $\mu\in L^\infty(\T^d)$ and pairwise distinct configuration $\ux_N \in (\T^d)^N$, it holds that
\begin{multline}
\left|\int_{\T^d}\phi(x)d\paren*{\frac{1}{N}\sum_{j=1}^N\d_{x_j} - \mu}(x)\right| \leq C_d\|\nabla\phi\|_{L^\infty}N^{-1/d} \\
+\|\nabla\phi\|_{L^2}\paren*{\Fr_N(\ux_N,\mu) + C_d(1+\|\mu\|_{L^\infty})\paren*{\frac{1+(\log N)\indic_{d=2}}{N^{2/d}}} }^{1/2}.
\end{multline}
\end{prop}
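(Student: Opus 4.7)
The plan is to use the standard smearing/truncation technique for the modulated Coulomb energy developed by Serfaty and collaborators: regularize the Dirac masses on a short scale $\eta>0$, use duality in $\dot H^{-1}$ for the smoothed measure, and then recover $\Fr_N$ up to an explicit self-energy error via Newton's theorem for the Coulomb kernel.

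More precisely, set $\eta := N^{-1/d}$ (or, to avoid overlap issues, $\eta_j := \min(N^{-1/d}, \tfrac14 \min_{k\neq j}|x_j-x_k|)$ as in \cite{Rosenzweig2021ne}). For each $j$, replace $\delta_{x_j}$ by $\delta_{x_j}^{(\eta)}$, the uniform probability measure on $\partial B(x_j,\eta)$, and write
\begin{equation*}
\int_{\T^d}\phi\, d\paren*{\tfrac1N\sum_j \delta_{x_j}-\mu} = \int_{\T^d}\phi\, d\paren*{\tfrac1N\sum_j\delta_{x_j}-\tfrac1N\sum_j\delta_{x_j}^{(\eta)}} + \int_{\T^d}\phi\, d\paren*{\tfrac1N\sum_j\delta_{x_j}^{(\eta)}-\mu}.
\end{equation*}
The first piece (smearing error) is bounded directly: each $\delta_{x_j}^{(\eta)}$ has mean $x_j$, so a Taylor expansion of $\phi$ around $x_j$ gives $|\int\phi\, d(\delta_{x_j}-\delta_{x_j}^{(\eta)})|\leq \|\nabla\phi\|_{L^\infty}\eta$, hence the first piece is $\lesssim \|\nabla\phi\|_{L^\infty} N^{-1/d}$, matching the first term on the right of the claim.

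For the second piece, since $\tfrac1N\sum_j\delta_{x_j}^{(\eta)}-\mu$ has zero mean on $\T^d$, it admits a potential $h_N^{(\eta)} \coloneqq (-\D)^{-1}(\tfrac1N\sum_j\delta_{x_j}^{(\eta)}-\mu) \in \dot H^1(\T^d)$. Integration by parts and Cauchy--Schwarz yield
\begin{equation*}
\left|\int_{\T^d}\phi\, d\paren*{\tfrac1N\sum_j\delta_{x_j}^{(\eta)}-\mu}\right| = \left|\int_{\T^d}\nabla\phi\cdot\nabla h_N^{(\eta)}\right| \leq \|\nabla\phi\|_{L^2}\|\nabla h_N^{(\eta)}\|_{L^2}.
\end{equation*}
The task then reduces to comparing $\|\nabla h_N^{(\eta)}\|_{L^2}^2$ with $\Fr_N(\ux_N,\mu)$. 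Invoking Newton's theorem for $V$ (so $V\ast \sigma_\eta = V$ outside $B(0,\eta)$, where $\sigma_\eta$ is uniform on $\partial B(0,\eta)$), one obtains the truncation identity
\begin{equation*}
\|\nabla h_N^{(\eta)}\|_{L^2}^2 \leq \Fr_N(\ux_N,\mu) + \frac{g_\eta(0)}{N} + C_d\,\|\mu\|_{L^\infty}\,\eta^{2},
\end{equation*}
where $g_\eta \coloneqq V\ast\sigma_\eta$ and $g_\eta(0) \sim \eta^{2-d}$ for $d\geq 3$ while $g_\eta(0)\sim \log(1/\eta)$ for $d=2$. With $\eta = N^{-1/d}$ the self-energy contribution $g_\eta(0)/N$ is of order $(1+(\log N)\indic_{d=2})/N^{2/d}$, and $\eta^2 \leq N^{-2/d}$, so combining the two pieces of the split yields exactly the claimed bound.

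The main technical obstacle is establishing the truncation identity in Step 4 in a form clean enough to produce only the displayed self-energy and $\|\mu\|_{L^\infty}$-dependent error. The subtlety is that for a general configuration $\ux_N$, one cannot fix a single $\eta$ with $\eta < \tfrac12\min_{i\neq j}|x_i-x_j|$; the $j$-dependent truncation radius $\eta_j$ above resolves this, but then one must track residual terms coming from the places where the smeared and un-smeared Coulomb kernels disagree and from integrals of $V-g_{\eta_j}$ against $\mu$ near each $x_j$. Each such residual is $O(\|\mu\|_{L^\infty}\eta_j^2)$ by the $L^\infty$ bound on $\mu$ together with the fact that $V-g_{\eta_j}$ is supported in $B(x_j,\eta_j)$ and controlled there, and after summing and using $\eta_j\leq N^{-1/d}$ we recover the advertised error. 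All of this is essentially the content of \cite[Proposition 3.8]{Rosenzweig2021ne} (which refines \cite[Proposition 3.6]{Serfaty2020} to the periodic, non-confined setting), and no new ideas are required beyond adapting the argument verbatim.
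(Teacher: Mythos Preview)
The paper does not actually prove this proposition; it is quoted verbatim from \cite[Proposition 3.8]{Rosenzweig2021ne} (itself a periodic refinement of \cite[Proposition 3.6]{Serfaty2020}), and your sketch is exactly the smearing/truncation argument used there. So your proposal is correct and matches the paper's approach by construction, since you explicitly invoke the same reference the paper cites.
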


The second functional inequality we need is to control the second term in the right-hand side of \eqref{eq:MEtd} in terms of the expected modulated potential energy plus some terms which are $o(1)$ as $N\rightarrow\infty$. A nonsharp estimate, where the sharpness is measured in terms of the size of the additive $N$-dependent error terms, for such expressions was originally proven by Serfaty in \cite[Proposition 1.1]{Serfaty2020} using a renormalization procedure to re-express this expression in terms of a stress-energy tensor. Serfaty later proved the (believed) sharp estimate, which we state below, in \cite[Proposition 4.1]{Serfaty2021} (see also \cite[Proposition 3.9]{Rosenzweig2021ne} for a more direct proof by the author).

\begin{prop}\label{prop:comm}
There exists a constant $C_d>0$ such that for any Lipschitz vector field $v:\T^d\rightarrow\R^d$, function $\mu \in L^\infty(\T^d)$, and pairwise distinct configuration $\ux_N \in (\T^d)^N$, it holds that
\begin{multline}
\left|\int_{(\T^d)^2\setminus\triangle} \paren*{v(x)-v(y)}\cdot \nabla V(x-y)d\paren*{\frac{1}{N}\sum_{j=1}^N\d_{x_j} - \mu}^{\otimes 2}(x,y)\right| \\
\leq C_d \|\nabla v\|_{L^\infty}\paren*{\Fr_N(\ux_N,\mu) + C_d(1+\|\mu\|_{L^\infty})\paren*{\frac{1+(\log N)\indic_{d=2}}{N^{2/d}}} }.
\end{multline}
\end{prop}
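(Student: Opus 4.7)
The approach follows the electric field / stress-energy renormalization scheme pioneered by Serfaty. Introduce the formal Coulomb potential $h := V \ast \nu$ with $\nu := \frac{1}{N}\sum_{j=1}^N \d_{x_j} - \mu$, so that $-\D h = \nu$, and let $E := \nabla h$ denote the associated electric field. Using the $(x,y)\leftrightarrow(y,x)$ symmetry of the integrand, the left-hand side equals $2\int v\cdot\nabla h\, d\nu$, and a formal integration by parts (replacing $d\nu$ by $-\D h\,dx$ and moving one derivative onto $v^i\p_i h$) yields the stress-energy identity
\begin{equation}
\int_{(\T^d)^2} (v(x)-v(y))\cdot \nabla V(x-y)\, d\nu^{\otimes 2}(x,y) = \int_{\T^d} \p_j v^i\paren*{2 E^i E^j - \d^{ij}|E|^2}\, dx,
\end{equation}
which formally bounds the integral by $C_d\|\nabla v\|_{L^\infty}\|E\|_{L^2}^2$. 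The obstacle is that $E$ is not square-integrable near the $x_j$'s, so the identity must be renormalized.

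\textbf{Truncation.} To handle the diagonal singularity, I would smear each Dirac: replace $\d_{x_j}$ by a truncation $\d_{x_j}^{(\eta)}$ (e.g.\ the normalized surface measure on $\p B(x_j,\eta)$), which by Newton's shell theorem generates the unperturbed potential $V(\cdot-x_j)$ outside $B(x_j,\eta)$. Writing $\nu_\eta$, $h_\eta$, $E_\eta$ for the corresponding smeared objects, the truncated analogue of the identity above holds rigorously, and a direct computation separating self- from cross-interactions of the smeared charges produces
\begin{equation}
\int_{\T^d}|E_\eta|^2\, dx = \Fr_N(\ux_N,\mu) + \frac{\kappa_d(\eta)}{N} + O\paren*{(1+\|\mu\|_{L^\infty})\eta^2},
\end{equation}
where $\kappa_d(\eta) \sim \log \eta^{-1}$ in $d=2$ and $\kappa_d(\eta) \sim \eta^{2-d}$ in $d \geq 3$ records the excluded point-mass self-energy. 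The discrepancy between the original left-hand side (with the diagonal excised) and its smeared version lives in $\eta$-tubes around the diagonal and is estimated using $|v(x)-v(y)|\leq \|\nabla v\|_{L^\infty}|x-y|$ together with $|x-y||\nabla V(x-y)| \lesssim V(x-y)+1$, giving a control of the same order.

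\textbf{Optimization and principal obstacle.} Combining the two bounds and optimizing $\eta = N^{-1/2}$ for $d=2$, respectively $\eta = N^{-1/d}$ for $d=3$, the correction $\kappa_d(\eta)/N$ saturates at the claimed rate $\frac{1+(\log N)\indic_{d=2}}{N^{2/d}}$, while the residual $\eta^2(1+\|\mu\|_{L^\infty})$ is absorbed into the same bound. The principal difficulty is the sharp bookkeeping in the renormalization: keeping only a linear dependence on $\|\nabla v\|_{L^\infty}$ (a naive triangle inequality applied to $|v(x)-v(y)||\nabla V(x-y)|$ easily produces a quadratic dependence) and a linear dependence on $1+\|\mu\|_{L^\infty}$ requires exploiting the exact cancellations both in the stress-energy identity and in the self-energy subtraction. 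An alternative route, which seems to underlie \cite[Proposition 3.9]{Rosenzweig2021ne}, bypasses the introduction of the full stress-energy tensor by working directly with the truncated integrand against the modulated measure and exploiting the smeared-charge identity to extract $\Fr_N$ without detouring through $\|E_\eta\|_{L^2}^2$; in either case, the accounting at the truncation scale is where the $N^{2/d}$ (with logarithmic loss in $d=2$) threshold enters.
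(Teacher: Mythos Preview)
The paper does not actually prove this proposition: it is quoted from \cite[Proposition 4.1]{Serfaty2021}, with an alternative argument referenced as \cite[Proposition 3.9]{Rosenzweig2021ne}. Your outline is exactly the stress--energy/smearing renormalization scheme behind those references, and the formal ingredients you list (the identity $2\int v\cdot\nabla h\,d\nu = \int \p_j v^i(2E^iE^j-\d^{ij}|E|^2)$, Newton-shell smearing, the self-energy expansion $\int|E_\eta|^2 = \Fr_N + \kappa_d(\eta)/N + O(\eta^2\|\mu\|_{L^\infty})$, and the scale choice $\eta\sim N^{-1/d}$) are all correct.

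One point worth sharpening: the passage from the non-sharp estimate of \cite[Proposition 1.1]{Serfaty2020} to the sharp $N^{-2/d}$ rate in \cite{Serfaty2021,Rosenzweig2021ne} is not automatic from a uniform truncation $\eta=N^{-1/d}$ together with a crude bound on the near-diagonal discrepancy. Your sentence ``the discrepancy\ldots\ is estimated using $|v(x)-v(y)|\leq\|\nabla v\|_{L^\infty}|x-y|$ together with $|x-y||\nabla V(x-y)|\lesssim V(x-y)+1$, giving a control of the same order'' is where the work hides: applied naively this recovers only the older, weaker rate, because one has no a priori control on how many pairs $(x_i,x_j)$ fall within distance $2\eta$. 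The sharp argument requires either point-dependent truncation radii $\eta_i$ tied to nearest-neighbor distances (Serfaty's route), or a direct comparison of the truncated commutator to $\Fr_N$ that absorbs the near-diagonal pairs back into the modulated energy itself rather than bounding them separately (the route in \cite{Rosenzweig2021ne} that you allude to). Your final paragraph shows you are aware this bookkeeping is the crux, but as written the outline does not yet distinguish the sharp mechanism from the non-sharp one.
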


We remark that \cref{prop:comm} has an improved dependence on $N$ for the additive error terms compared to the earlier estimate \cite[Proposition 1.1]{Serfaty2020} invoked by Golse and Paul \cite{GP2021}. As commented several times before, the sharper estimate of \cref{prop:comm} is necessary to obtain the scaling relation for $\vep,N$ presented in \cref{thm:main}. Such a relation, of course, is not present in the work of Golse-Paul, since they consider a different asymptotic regime.

\subsection{Gronwall argument}\label{ssec:Mpfgron}
With the identity \eqref{eq:MEtd} from the \cref{ssec:Mpftd} and the functional inequalities reviewed in \cref{ssec:Mpffi}, let us now close our Gronwall argument for the modulated energy $\G_{\hbar,\vep,N}(R_{\hbar,\vep,N}^t,u^t)$, thereby proving the inequality \eqref{eq:thmmain}.

\begin{proof}[Proof of estimate \eqref{eq:thmmain}]
We start by estimating each of the terms in the right-hand side of \eqref{eq:MEtd}. All of the estimates below are static (i.e. pointwise in time), therefore we suppress the time dependence to simplify the notation.

\medskip
\textbullet By Cauchy-Schwarz,
\begin{multline}\label{eq:MEft1}
\left|\Tr_{\H}\paren*{\sqrt{R_{\hbar,\vep, N:1}}(\hbar D-u(X))\Sigma (\hbar D-u(X)) \sqrt{R_{\hbar,\vep, N:1}}}\right| \\
\lesssim_d \|\nabla u\|_{L^\infty} \Tr_{\H}\paren*{\sqrt{R_{\hbar,\vep, N:1}}|\hbar D-u(X)|^2\sqrt{R_{\hbar,\vep, N:1}}}.
\end{multline}

\textbullet Applying \cref{prop:comm} with $v=u$, $\mu = 1+\vep^2\Uu$, and fixed $\ux_N\in (\T^d)^N$ with distinct components, we find
\begin{multline}
\left|\int_{(\T^d)^2\setminus\triangle} \paren*{u(x)-u(y)}\cdot\nabla V(x-y)d\paren*{\frac{1}{N}\sum_{j=1}^N\d_{x_j}-1-\vep^2\Uu}^{\otimes 2}(x,y)\right| \\
\leq  C_d\|\nabla u\|_{L^\infty}\paren*{\Fr_N(\ux_N,1+\vep^2\Uu) +C_d (1+\vep^2\|\nabla u\|_{L^\infty}^2)\frac{(1+(\log N)\indic_{d=2})}{N^{2/d}}}.
\end{multline}
Since $\rho_{\hbar,\vep,N}$ is a probability density, it follows that
\begin{multline}\label{eq:MEft2}
\frac{1}{\vep^{2}}\left|\int_{(\T^d)^N}\int_{(\T^d)^2\setminus\triangle} \paren*{u(x)-u(y)}\cdot\nabla V(x-y)d\paren*{\frac{1}{N}\sum_{j=1}^N\d_{x_j}-1-\vep^2\Uu}^{\otimes 2}(x,y)d\rho_{\hbar,\vep,N}(\ux_N)\right| \\
\leq \frac{C_d\|\nabla u\|_{L^\infty}}{\vep^2}\paren*{C_d(1+\vep^2\|\nabla u\|_{L^\infty}^2)\frac{(1+(\log N)\indic_{d=2})}{N^{2/d}} + \int_{(\T^d)^N} \Fr_N(\ux_N,1+\vep^2\Uu)d\rho_{\hbar,\vep,N}(\ux_N)}.
\end{multline}

\textbullet Lastly, applying \cref{prop:coer} with $\mu=1+\vep^2\Uu$ and $\phi = \nabla(-\D)^{-1}(u\Uu)$ and $\phi = \p_t p$, successively, it follows that
\begin{multline}\label{eq:t3exp}
\left|\int_{\T^d} \div(-\D)^{-1}(u\Uu)(x)d\paren*{\frac{1}{N}\sum_{j=1}^N\d_{x_j} - 1-\vep^2\Uu}(x)\right| \leq C_d\|\nabla\div(-\D)^{-1}(u\Uu)\|_{L^\infty} N^{-1/d} \\
+ C_d\|\nabla\div(-\D)^{-1}(u\Uu)\|_{L^2}\paren*{\Fr_N(\ux_N,1+\vep^2\Uu) + C_d(1+\vep^2\|\nabla u\|_{L^\infty}^2)\frac{(1+(\log N)\indic_{d=2})}{N^{2/d}}}^{1/2}
\end{multline}
and
\begin{multline}\label{eq:t4exp}
\left|\int_{\T^d} \p_t p(x)d\paren*{\frac{1}{N}\sum_{j=1}^N\d_{x_j} - 1-\vep^2\Uu}(x)\right| \leq C_d\|\nabla\p_t p\|_{L^\infty} N^{-1/d} \\
+ C_d\|\nabla\p_t p\|_{L^2}\paren*{\Fr_N(\ux_N,1+\vep^2\Uu) + C_d(1+\vep^2\|\nabla u\|_{L^\infty}^2)\frac{(1+(\log N)\indic_{d=2})}{N^{2/d}}}^{1/2}.
\end{multline}
Writing $1=\vep^{-1}\vep$ and using the elementary inequality $ab\leq (a^2+b^2)/2$, it follows that the preceding right-hand sides are respectively bounded by
\begin{multline}
C_d\vep^2\|\nabla\div(-\D)^{-1}(u\Uu)\|_{L^2}^2 + C_d\|\nabla\div(-\D)^{-1}(u\Uu)\|_{L^\infty} N^{-1/d}\\
+ \frac{\Fr_N(\ux_N,1+\vep^2\Uu)}{\vep^2} + \frac{C_d(1+\vep^2\|\nabla u\|_{L^\infty}^2)}{\vep^2}\paren*{\frac{1+(\log N)\indic_{d=2}}{N^{2/d}}}
\end{multline}
and
\begin{multline}
C_d\vep^2\|\nabla\p_t p\|_{L^2}^2 + C_d\|\nabla\p_t p\|_{L^\infty} N^{-1/d}+ \frac{\Fr_N(\ux_N,1+\vep^2\Uu)}{\vep^2} \\
 + \frac{C_d(1+\vep^2\|\nabla u\|_{L^\infty}^2)}{\vep^2}\paren*{\frac{1+(\log N)\indic_{d=2}}{N^{2/d}}}.
\end{multline}
By elliptic regularity and the algebra property of H\"older spaces (for example, see the proof of \cite[Lemma 4.3]{Rosenzweig2021ne} for details),
\begin{equation}
\|\nabla\div(-\D)^{-1}(u\Uu)\|_{L^\infty} \lesssim_{d,\al} \|u\|_{C^{1,\al}}^3
\end{equation}
for any $0<\al\leq 1$. By direct computation of the equation satisfied by $\p_t p$ using \eqref{eq:Eul} (see \cite[Equation 4.10]{Rosenzweig2021ne}), then again using elliptic regularity and the algebra property of H\"older spaces, we also have
\begin{equation}
\|\nabla\p_t p\|_{L^\infty} \lesssim_{d,\al}\|u\|_{C^{1,\al}}^3.
\end{equation}
Using that $\|\cdot\|_{L^2}\leq \|\cdot\|_{L^\infty}$, taking expectations of both sides of inequalities \eqref{eq:t3exp} and \eqref{eq:t4exp} with respect to the law $\rho_{\hbar,\vep,N}$ and using that, modulo a set of Lebesgue measure zero, the configurations $\ux_N$ are pairwise distinct, we then obtain
\begin{multline}\label{eq:MEft3}
\left|\int_{(\T^d)^N}\int_{\T^d}\paren*{\div(-\D)^{-1}(u\Uu)(x) + \p_t p (x)}d\paren*{\frac{1}{N}\sum_{j=1}^N\d_{x_j} - 1-\vep^2\Uu}(x)d\rho_{\hbar,\vep,N}(\ux_N)\right| \\
\leq C_{d,\al}\paren*{\vep^2\|u\|_{C^{1,\al}}^3+N^{-1/d}}\|u\|_{C^{1,\al}}^3 + \frac{C_d(1+\vep^2\|\nabla u\|_{L^\infty}^2)(1+(\log N)\indic_{d=2})}{\vep^2 N^{2/d}} \\
+ \frac{1}{\vep^{2}}\int_{(\T^d)^N}\Fr_N(\ux_N,1+\vep^2\Uu)d\rho_{\hbar,\vep,N}(\ux_N).
\end{multline}

\medskip

Putting together the estimates \eqref{eq:MEft1}, \eqref{eq:MEft2}, and \eqref{eq:MEft3}, then integrating with respect to time over the interval $[0,t]$, we arrive at
\begin{multline}
\left|\G_{\hbar,\vep,N}(R_{\hbar,\vep,N}^t,u^t)\right| \leq \left|\G_{\hbar,\vep,N}(R_{\hbar,\vep,N}^0, u^0)\right|  + C_{d,\al}\int_0^t \paren*{\vep^2\|u^\tau\|_{C^{1,\al}}^3+N^{-1/d}}\|u^\tau\|_{C^{1,\al}}^3 d\tau \\
+ C_d\int_0^t\|\nabla u^\tau\|_{L^\infty} \Tr_{\H}\paren*{\sqrt{R_{\hbar,\vep,N:1}^\tau}|\hbar D-u^\tau(X)|^2\sqrt{R_{\hbar,\vep,N:1}^\tau}} d\tau \\
+C_d\int_0^t \frac{(1+\|\nabla u^\tau\|_{L^\infty})}{\vep^2}\Bigg(\frac{C_d(1+(\log N)\indic_{d=2})(1+\vep^2\|\nabla u^\tau\|_{L^\infty}^2)}{ N^{2/d}}  \\
 + \int_{(\T^d)^N}\Fr_N(\ux_N,1+\vep^2\Uu^\tau)d\rho_{\hbar,\vep,N}^\tau(\ux_N)\Bigg)d\tau.
\end{multline}
Recalling the definition \eqref{eq:qme} of $\G_{\hbar,\vep,N}$ and \cref{rem:MElb}, the right-hand side is controlled by
\begin{multline}
\left|\G_{\hbar,\vep,N}(R_{\hbar,\vep,N}^0, u^0)\right| + C_{d,\al}\int_0^t \paren*{\vep^2\|u^\tau\|_{C^{1,\al}}^3+N^{-1/d}}\|u^\tau\|_{C^{1,\al}}^3 d\tau\\
+C_d\int_0^t (1+\|\nabla u^\tau\|_{L^\infty})\paren*{\G_{\hbar,\vep,N}(R_{\hbar,\vep,N}^\tau,u^\tau) + \frac{C_d(1+(\log N)\indic_{d=2})(1+\vep^2\|\nabla u^\tau\|_{L^\infty}^2)}{\vep^2 N^{2/d}}}d\tau.
\end{multline}
Applying the Gronwall-Bellman lemma, we arrive at the desired estimate \eqref{eq:thmmain}.
\end{proof}

\subsection{Proof of \cref{thm:main}}\label{ssec:Mpfmain}
Let us now complete the proof of \cref{thm:main} by showing how the quantum modulated energy estimate of \eqref{eq:thmmain} implies the remaining weak-* convergence assertion \eqref{eq:thmmaincnv}. To do this, one could use the analysis of \cite[Section 4]{GP2021}. Instead, we present an argument which we feel more closely parallels the analysis used to show the classical modulated energy controls Sobolev convergence. As a consequence, we obtain strong, quantitative convergence in negative-order Sobolev spaces. 

We first deal with the convergence of the k-particle densities $\rho_{\hbar,\vep,N:k}^t$. Let $\{\ux_N^t\}_{t\geq 0}$ be a process with values in $(\T^d)^N$ whose time marginals are given by $\rho_{\hbar,\vep,N}^t$. Note that $\rho_{\hbar,\vep,N}^t$-a.s., it holds that $x_i^t\neq x_j^t$ for every $1\leq i\neq j\leq N$. Let $\mu_N^t \coloneqq \frac{1}{N}\sum_{j=1}^N \d_{x_j^t}$ denote the empirical measure associated to $\ux_N^t$. Using the coercivity estimate of \cref{prop:coer}, there is a constant $C_d>0$ such that for any $\phi\in W^{1,\infty}(\T^d)$,
\begin{multline}\label{eq:coer1}
\left|\int_{\T^d}\phi(x)d\paren*{\mu_N^t - 1-\vep^2\Uu^t}(x)\right| \leq   C_d\|\nabla\phi\|_{L^\infty} N^{-1/d} \\
+\|\nabla\phi\|_{L^2}\paren*{\Fr_N(\ux_N^t,1+\vep^2 \Uu^t) + C_d(1+\vep^2\|\nabla u^t\|_{L^\infty}^2)\paren*{\frac{1+(\log N)\indic_{d=2}}{N^{2/d}}}}^{1/2}.
\end{multline}
By Sobolev embedding, it follows from the estimate \eqref{eq:coer1} and duality that for any $1<p<\infty$ and $s<-(d+1)+\frac{d}{p}$,
\begin{equation}
\left\|\mu_N^t - 1-\vep^2\Uu^t\right\|_{W^{s,p}}^2 \lesssim_{d,s,p} \Fr_N(\ux_N^t,1+\vep^2 \Uu^t) + \frac{C_d(1+\vep^2\|\nabla u^t\|_{L^\infty}^2)(1+(\log N)\indic_{d=2})}{N^{2/d}},
\end{equation}
where $W^{s,p}$ denotes the usual Bessel potential space (e.g., see \cite[Section 1.3.1]{Grafakos2014m}). Taking the expectation of both sides of the preceding inequality, we then obtain
\begin{multline}\label{eq:Hsexp}
\int_{(\T^d)^N}\left\|\mu_N - 1-\vep^2\Uu^t\right\|_{W^{s,p}}^2 d\rho_{\hbar,\vep,N}^t(\ux_N) \lesssim_{d,s,p} \int_{(\T^d)^N} \Fr_N(\ux_N,1+\vep^2 \Uu^t)d\rho_{\hbar,\vep,N}^t(\ux_N) \\
+  \frac{C_d(1+\vep^2\|\nabla u^t\|_{L^\infty}^2)(1+(\log N)\indic_{d=2})}{N^{2/d}},
\end{multline}
where $\mu_N\coloneqq \frac{1}{N}\sum_{j=1}^N \d_{x_j}$. Next, using \cite[(7.20), (7.21)]{RS2016}, we see that for any symmetric test function $\phi\in C^\infty((\T^d)^k)$,
\begin{multline}\label{eq:coerk}
\left|\int_{(\T^d)^k}\phi(\ux_k) d\paren*{\rho_{\hbar,\vep,N:k}^t-(1+\vep^2\Uu^t)^{\otimes k}}(\ux_k) \right| \leq Ck\paren*{1+\vep^2\|\nabla u\|_{L^\infty}^2}^{k-1}\\
\times\sup_{\ux_{k-1}\in (\T^d)^{k-1}} \|\phi(\ux_{k-1},\cdot)\|_{W^{-s,p'}}\int_{(\T^d)^N} \left\|\mu_N-1-\vep^2\Uu^t \right\|_{W^{s,p}} d\rho_{\hbar,\vep,N}^t(\ux_N)
\end{multline}
for any index $s<-(d+1)+\frac{d}{p}$, where $\frac{1}{p'}+\frac{1}{p}=1$. Note that by Cauchy-Schwarz and using that $\rho_{\hbar,\vep,N}^t$ is a probability density, the preceding right-hand side is $\leq$
\begin{multline}
Ck\paren*{1+\vep^2\|\nabla u\|_{L^\infty}^2}^{k-1}\sup_{\ux_{k-1}\in (\T^d)^{k-1}} \|\phi(\ux_{k-1},\cdot)\|_{W^{-s,p'}}  \\
\times\paren*{\int_{(\T^d)^N} \left\|\mu_N-1-\vep^2\Uu^t \right\|_{W^{s,p}}^2 d\rho_{\hbar,\vep,N}^t(\ux_N)}^{1/2}.
\end{multline}
Combining this bound with \eqref{eq:Hsexp}, it follows that
\begin{multline}\label{eq:tpdenssup}
\left|\int_{(\T^d)^k}\phi(\ux_k) d\paren*{\rho_{\hbar,\vep,N:k}^t-(1+\vep^2\Uu^t)^{\otimes k}}(\ux_k) \right| \lesssim_{d,s,p} k\paren*{1+\vep^2\|\nabla u\|_{L^\infty}^2}^{k-1}\\
\times\sup_{\ux_{k-1}\in (\T^d)^{k-1}} \|\phi(\ux_{k-1},\cdot)\|_{W^{-s,p'}}  \Bigg(\int_{(\T^d)^N} \Fr_N(\ux_N,1+\vep^2 \Uu^t)d\rho_{\hbar,\vep,N}^t(\ux_N) \\
+  \frac{C_{d}(1+\vep^2\|\nabla u^t\|_{L^\infty}^2)(1+(\log N)\indic_{d=2})}{N^{2/d}}\Bigg)^{1/2}.
\end{multline}
For $r\in\R$ and $1<q<\infty$, let $(W^{r,q})^{\otimes_{\al_q} k}$ denote the completion with respect to the $q$-nuclear tensor norm of the k-fold algebraic tensor product of the space $W^{r,q}$. This space may be identified with the space of $\phi\in\Dc'((\T^d)^k)$ such that
\begin{equation}
\|\jp{\nabla_{x_1}}^{r}\cdots\jp{\nabla_{x_k}}^{r}\phi\|_{L^{q}((\T^d)^k)} < \infty,
\end{equation}
where $\jp{\nabla_{x_j}}^{r}$ is the Fourier multiplier with Japanese bracket symbol $\jp{2\pi\xi_j}^{r}$. We refer to \cite{SU2009} for details on tensor products of Sobolev spaces. Taking the supremum over $\|\phi\|_{(W^{-s,p'})^{\otimes_{\al_{p'}} k}} \leq 2$ in both sides of \eqref{eq:tpdenssup} and using that $(W^{-s,p'})^{\otimes_{\al_{p'}} k}$ embeds in $C((\T^d)^{k-1}; W^{-s,p'}(\T^d))$ by choice of $s$,
\begin{multline}
\left\|\rho_{\hbar,\vep,N:k}^t-(1+\vep^2\Uu^t)^{\otimes k}\right\|_{(W^{s,p})^{\otimes_{\al_p} k}} \lesssim_{d,s,p,k} \paren*{1+\vep^2\|\nabla u\|_{L^\infty}^2}^{k-1}\\
\times\Bigg(\int_{(\T^d)^N} \Fr_N(\ux_N,1+\vep^2 \Uu^t)d\rho_{\hbar,\vep,N}^t(\ux_N) +  \frac{C_{d}(1+\vep^2\|\nabla u^t\|_{L^\infty}^2)(1+(\log N)\indic_{d=2}) }{N^{2/d}} \Bigg)^{1/2}.
\end{multline}
From the triangle inequality and the embedding $L^\infty \subset W^{s,p}$, it then follows from the preceding estimate that
\begin{multline}\label{eq:rho1diff}
\left\|\rho_{\hbar,\vep, N:k}^t - 1\right\|_{(W^{s,p})^{\otimes_{\al_p} k}} \lesssim_{d,s,p,k} \vep^2\|\nabla u^t\|_{L^\infty}^2\paren*{1+(\vep^2 \|\nabla u^t\|_{L^\infty})^{k-1} } \\
+\paren*{1+\vep^2\|\nabla u\|_{L^\infty}^2}^{k-1}\Bigg(\int_{(\T^d)^N} \Fr_N(\ux_N,1+\vep^2 \Uu^t)d\rho_{\hbar,\vep,N}^t(\ux_N) \\
 +  \frac{C_{d}(1+\vep^2\|\nabla u^t\|_{L^\infty}^2) (1+(\log N)\indic_{d=2})}{N^{2/d}} \Bigg)^{1/2}.
\end{multline}
Using the estimate \eqref{eq:thmmain} to bound $\Fr_N(\ux_N,1+\vep^2\Uu^t)$ in the right-hand side yields the desired convergence in $(W^{s,p})^{\otimes_{\al_p} k}$ norm, provided that $1<p<\infty$ and $s<\frac{d}{p}-d-1$. This convergence also implies convergence in the weak-* topology for $\M((\T^d)^k)$ by using the density of $W^{-s,p'}\subset C(\T^d)$.

\bigskip
Proving convergence of the current $J_{\hbar,\vep,N:k}^t$ is simpler, and we can follow---albeit with modification to obtain rates of convergence---the general argument of \cite[Subsection 4.3]{GP2021}, which treated the $k=1$ case. To this end, we recall that the current $J_{\hbar,\vep,N:k}$ of the k-particle marginal $R_{\hbar,\vep,N:k}$ is, by definition, a vector with components $(J_{\hbar,\vep,N:k})_{j}$ in $(\M((\T^d)^k))^{d}$, for $j=1,\ldots,k$. In other words, for fixed $j$, $(J_{\hbar,\vep,N:k})_j = \paren*{(J_{\hbar,\vep,N:k})_j^{\al}}_{\al=1}^d$, where each $(J_{\hbar,\vep,N:k})_j^\al\in \M((\T^d)^k)$. $J_{\hbar,\vep,N:k}$ is the continuous linear functional on the Banach space $C((\T^d)^k; (\R^d)^k)$ of continuous bounded vector fields $v=(v_j)_{j=1}^k$, with $v_j=(v_j^\al)_{\al=1}^d \in C((\T^d)^k; \R^d)$,  defined by
\begin{equation}
\int_{(\T^d)^k} v(\ux_k)\cdot dJ_{\hbar,\vep,N:k}(\ux_k)= \Tr_{\H_k}\paren*{v(\ul{X}_k) \cdot \paren*{R_{\hbar,\vep,N:k} \vee \frac{\hbar}{2} D_{\ux_k}}}.
\end{equation}
Above, $D_{\ux_k} \coloneqq -i\nabla_{\ux_k}$, with $\nabla_{\ux_k} \coloneqq (\nabla_{x_1},\ldots,\nabla_{x_k})$ with each $\nabla_{x_j}$ denoting the gradient with respect to the variable $x_j\in \R^d$. Let $(u^t)^{\otimes k}(\ux_k) \coloneqq (u^t(x_1),\ldots,u^t(x_k)) \in (\R^d)^k$. We let $(u^t)^{\otimes k}(\ul{X}_k)$ denote the induced vector of multiplication operators. By cyclicity of trace, the commutativity of multiplication operators, and the definition of the k-particle density $\rho_{\hbar,\vep,N:k}^t$, we have that
\begin{equation}
\frac{1}{2}\Tr_{\H_k}\paren*{v(\ul{X}_k)\cdot \paren*{R_{\hbar,\vep,N:k}^t \vee (u^t)^{\otimes k}(\ul{X}_k)}} = \int_{(\T^d)^k} v(\ux_k)\cdot (u^t)^{\otimes k}(\ux_k)d\rho_{\hbar,\vep,N:k}^t(\ux_k).
\end{equation}
Hence,
\begin{multline}
\int_{(\T^d)^k}v(\ux_k)\cdot d\paren*{J_{\hbar,\vep,N:k}^t-\rho_{\hbar,\vep,N:k}^t(u^t)^{\otimes k}}(\ux_k) \\
= \frac{1}{2}\Tr_{\H_k}\paren*{v(\ul{X}_k) \cdot \paren*{R_{\hbar,\vep,N:k}^t \vee \paren*{\hbar D_{\ux_k}-(u^t)^{\otimes k}(\ul{X}_k)}}}.
\end{multline}
By Cauchy-Schwarz, the magnitude of the right-hand side is $\lesssim_d$
\begin{multline}
\sum_{j=1}^k \left\|v_j(\ul{X}_k)\sqrt{R_{\hbar,\vep,N:k}^t}\right\|_2 \left\|\sqrt{R_{\hbar,\vep,N:k}^t}\paren*{\hbar D_{x_j}-u^{t}(X_j)} \right\|_2 \\
\lesssim_d \|v\|_{L^\infty}\sum_{j=1}^k  \left\|\sqrt{R_{\hbar,\vep,N:k}^t}\paren*{\hbar D_{x_j}-u^{t}(X_j)} \right\|_2.
\end{multline}
Above, we have used that $R_{\hbar,\vep,N:k}^t$ has unit trace. Writing the Hilbert-Schmidt norm as the square root of a trace, we see that
\begin{align}
\left\|\sqrt{R_{\hbar,\vep,N:k}^t}\paren*{\hbar D_{x_j}-u^{t}(X_j)} \right\|_2^2 \leq \Tr_{\H_k}\paren*{\sqrt{R_{\hbar,\vep,N:k}^t}\left|\hbar D_{x_j}-u^t(X_j)\right|^2\sqrt{R_{\hbar,\vep,N:k}^t}} \nn\\
= \Tr_{\H}\paren*{\sqrt{R_{\hbar,\vep,N:1}^t}\left|\hbar D-u^t(X)\right|^2\sqrt{R_{\hbar,\vep,N:1}^t}},
\end{align}
where the ultimate line follows from symmetry with respect to permutation of particle labels. After a little bookkeeping, we realize we have shown
\begin{multline}
\left|\int_{(\T^d)^k}v(\ux_k)\cdot d\paren*{J_{\hbar,\vep,N:k}^t-\rho_{\hbar,\vep,N:k}^t(u^t)^{\otimes k}}(\ux_k)  \right| \\
\lesssim_d k \|v\|_{L^\infty}\paren*{\Tr_{\H}\paren*{\sqrt{R_{\hbar,\vep,N:1}^t}\left|\hbar D-u^t(X)\right|^2\sqrt{R_{\hbar,\vep,N:1}^t}} }^{1/2}
\end{multline}
for any $v\in C((\T^d)^k; (\R^d)^k)$. Evidently, the right-hand side of the preceding inequality is controlled by the modulated energy $\G_{\hbar,\vep,N}(R_{\hbar,\vep,N}^t,u^t)$. Taking the supremum over all such vector fields with $\|v\|_{L^\infty} \leq 1$, implies that
\begin{equation}
\left\|J_{\hbar,\vep,N:k}^t-\rho_{\hbar,\vep,N:k}^t(u^t)^{\otimes k}\right\|_{\M} \lesssim_d k \paren*{\Tr_{\H}\paren*{\sqrt{R_{\hbar,\vep,N:1}^t}\left|\hbar D-u^t(X)\right|^2\sqrt{R_{\hbar,\vep,N:1}^t}} }^{1/2},
\end{equation}
where $\M = \M((\T^d)^k)$. By Sobolev embedding, we also have
\begin{equation}\label{eq:Jrhoudiff}
\left\|J_{\hbar,\vep,N:k}^t-\rho_{\hbar,\vep,N:k}^t(u^t)^{\otimes k}\right\|_{(W^{s,p})^{\otimes_{\al_p} k}} \lesssim_{d,s,p,k} \paren*{\Tr_{\H}\paren*{\sqrt{R_{\hbar,\vep,N:1}^t}\left|\hbar D-u^t(X)\right|^2\sqrt{R_{\hbar,\vep,N:1}^t}} }^{1/2}
\end{equation}
for any $1<p<\infty$ and $s<\frac{d}{p}-d$. Above, the norms are, with a slight abuse of notation, for vector-valued measures/distributions. By the triangle inequality,
\begin{multline}\label{eq:Judiff}
\left\|J_{\hbar,\vep,N:k}^t-(u^t)^{\otimes k}\right\|_{(W^{s,p})^{\otimes_{\al_p} k}} \leq \left\|J_{\hbar,\vep,N:k}^t-\rho_{\hbar,\vep,N:k}^t(u^t)^{\otimes k}\right\|_{(W^{s,p})^{\otimes_{\al_p} k}} \\
+ \left\|(\rho_{\hbar,\vep,N:k}^t-1)(u^t)^{\otimes k}\right\|_{(W^{s,p})^{\otimes_{\al_p} k}}.
\end{multline}
For any test function $\phi\in C^\infty((\T^d)^k)$, we observe from Sobolev embedding that
\begin{equation}
\|\phi (u^t)^{\otimes k}\|_{(W^{-s,p'})^{\otimes_{\al_{p'}} k}} \lesssim_{d,s,p,k}  \|\phi\|_{(W^{-s,p'})^{\otimes_{\al_{p'}} k}}\|u^t\|_{W^{-s,p'}}^k \lesssim_{d,s,p,\al,k} \|\phi\|_{(W^{-s,p'})^{\otimes_{\al_{p'}} k}}\|u^t\|_{C^{1,\al}}^k,
\end{equation}
provided that $1+\al>-s>d-\frac{d}{p}$. Such an $s$ exists provided that we choose $p<\frac{d}{(d-1-\al)_+}$, where $(\cdot)_+\coloneqq \max\{0,\cdot\}$. It follows now from duality that
\begin{equation}
\left\|(\rho_{\hbar,\vep,N:k}^t-1)(u^t)^{\otimes k}\right\|_{(W^{s,p})^{\otimes_{\al_p} k}} \lesssim_{d,s,p,\al,k} \|u^t\|_{C^{1,\al}}^k \left\|\rho_{\hbar,\vep,N:k}^t-1\right\|_{(W^{s,p})^{\otimes_{\al_p} k}}.
\end{equation}
Assuming that we also have $s<\frac{d}{p}-d-1$, which can be ensured by choosing $p<\frac{d}{(d-\al)_+}$, we may apply the estimate \eqref{eq:rho1diff} to find that the preceding right-hand side is $\lesssim_{d,s,p,k}$
\begin{multline}
\|u^t\|_{C^{1,\al}}^k\Bigg(\vep^2\|\nabla u^t\|_{L^\infty}^2\paren*{1+(\vep^2 \|\nabla u^t\|_{L^\infty})^{k-1} } +\paren*{1+\vep^2\|\nabla u\|_{L^\infty}^2}^{k-1}\\
\times\Bigg(\int_{(\T^d)^N} \Fr_N(\ux_N,1+\vep^2 \Uu^t)d\rho_{\hbar,\vep,N}^t(\ux_N)  +  \frac{C_{d}(1+\vep^2\|\nabla u^t\|_{L^\infty}^2) (1+(\log N)\indic_{d=2})}{N^{2/d}} \Bigg)^{1/2}\Bigg).
\end{multline}
Applying the preceding estimate and the estimate \eqref{eq:Jrhoudiff} to the right-hand side of \eqref{eq:Judiff}, we conclude that for any $1<p<\frac{d}{(d-\al)+}$ and $s<\frac{d}{p}-d-1$,
\begin{multline}
\left\|J_{\hbar,\vep,N:k}^t-(u^t)^{\otimes k}\right\|_{(W^{s,p})^{\otimes_{\al_p} k}} \lesssim_{d,s,p,\al,k} \paren*{\Tr_{\H}\paren*{\sqrt{R_{\hbar,\vep,N:1}^t}\left|\hbar D-u^t(X)\right|^2\sqrt{R_{\hbar,\vep,N:1}^t}} }^{1/2} \\
+ \|u^t\|_{C^{1,\al}}^k\Bigg(\vep^2\|\nabla u^t\|_{L^\infty}^2\paren*{1+(\vep^2 \|\nabla u^t\|_{L^\infty})^{k-1} } +\paren*{1+\vep^2\|\nabla u\|_{L^\infty}^2}^{k-1} \\
\times\Bigg(\int_{(\T^d)^N} \Fr_N(\ux_N,1+\vep^2 \Uu^t)d\rho_{\hbar,\vep,N}^t(\ux_N)  +  \frac{C_{d}(1+\vep^2\|\nabla u^t\|_{L^\infty}^2) (1+(\log N)\indic_{d=2})}{N^{2/d}} \Bigg)^{1/2}\Bigg).
\end{multline}
The right-hand side is evidently controlled by the quantum modulated energy $\G_{\hbar,\vep,N}(R_{\hbar,\vep,N}^t, u^t)$, and therefore this last estimate yields the desired Sobolev convergence. Convergence in the weak-* topology for $\M((\T^d)^k)$ of each of the components of $(u^t)^{\otimes k}$ follows from the preceding Sobolev convergence and the embedding $W^{-s,p'}(\T^d) \subset C(\T^d)$.

\subsection{Proof of \cref{thm:mainH}}\label{ssec:MpfH}
We now prove \cref{thm:mainH}. Our approach is similar to the proof of \cite[Proposition 2.4]{GP2021}, which yields convergence of the density and current of the nonlinear Hartree equation to a solution of the Euler-Poisson equation in the classical limit, except we emphasize the ``mean-field'' perspective in our presentation to a greater degree than in the cited work.

Let $R_{\hbar,\vep}$ denote the solution to the Hartree equation \eqref{eq:Har}. For $N\in\N$, we let $R_{\hbar,\vep}^{\otimes N}$ be the $N$-fold tensor product of $R_{\hbar,\vep}$, which yields a bosonic density matrix on the Hilbert space $\H_N$. Evidently, the k-particle marginal of $R_{\hbar,\vep}^{\otimes N}$ is just $R_{\hbar,\vep}^{\otimes k}$. Recalling the mean-field Hamiltonian $H_{\hbar,\vep}^t$ from \eqref{eq:mfham} and considering the N-body Hamiltonian
\begin{equation}
\tl{H}_{\hbar,\vep,N}^t \coloneqq \sum_{j=1}^N \Ib_{\H}^{\otimes j-1} \otimes H_{\hbar,\vep}^t \otimes \Ib_{\H}^{\otimes N-j},
\end{equation}
we see that $R_{\hbar,\vep}^{\otimes N}$ is a solution in $C(\R; \mathcal{D}_s(\H_N))$ to the Cauchy problem
\begin{equation}
\begin{cases}
i\hbar\p_t\tl{R}_{\hbar,\vep,N}^t = \comm{\tl{H}_{\hbar,\vep,N}^t}{\tl{R}_{\hbar,\vep,N}^t} \\
\tl{R}_{\hbar,\vep,N}^t|_{t=0} = (R_{\hbar,\vep}^0)^{\otimes N}.
\end{cases}
\end{equation}

Consider the modulated energy 
\begin{multline}
\G_{\hbar,\vep,N}( (R_{\hbar,\vep}^t)^{\otimes N}, u^t) = \Tr_{\H}\paren*{\sqrt{R_{\hbar,\vep}^t}|\hbar D-u^t(X)|^2\sqrt{R_{\hbar,\vep}^t}} \\
+\frac{1}{\vep^2}\int_{(\T^d)^N} \Fr_N(\ux_N,1+\vep^2\Uu^t)d(\rho_{\hbar,\vep}^t)^{\otimes N}(\ux_N).
\end{multline}
Using \cref{rem:pMEsym} and Fubini-Tonelli, we have the identity
\begin{multline}\label{eq:peidH}
\int_{(\T^d)^N}\Fr_N(\ux_N,1+\vep^2\Uu^t)d(\rho_{\hbar,\vep}^t)^{\otimes N}(\ux_N)= \frac{(N-1)}{N}\int_{\T^d}(V\ast\rho_{\hbar,\vep}^t)(x_1)d\rho_{\hbar,\vep}^t(x_1)\\
-2\int_{\T^d} (-\D)^{-1}(1+\vep^2\Uu^t)(x_1)d\rho_{\hbar,\vep}^t(x_1) + \int_{(\T^d)^2}V(x-y)d\paren*{1+\vep^2\Uu^t}^{\otimes 2}(x,y).
\end{multline}
We want to repeat the proof of \cref{prop:MEtd}, but we need to replace the identity \eqref{eq:Hrep} with
\begin{multline}
\frac{d}{dt}\Tr_{\H}\paren*{\sqrt{R_{\hbar,\vep}^t}(u^t(X)\vee \hbar D)\sqrt{R_{\hbar,\vep}^t}}=-\frac{2}{\vep^2}\int_{\T^d} u^t(x_1)\cdot \nabla (V\ast\rho_{\hbar,\ep}^t)(x_1)d\rho_{\hbar,\vep}^t(x_1) \\
+ \Tr_{\H}\paren*{ \paren*{\paren*{\frac{\hbar}{2} D\vee \nabla u^t(X) - (u^t\cdot\nabla u^t)(X) - \nabla p^t(X)}\vee\hbar D}R_{\hbar,\vep}^t}.
\end{multline}
Note that by Fubini-Tonelli and the fact that $\nabla V$ is odd, we can symmetrize the first expression in the right-hand side as
\begin{multline}
-\frac{1}{\vep^2}\int_{(\T^d)^2}\paren*{u^t(x_1)-u^t(x_2)}\cdot\nabla V(x_1-x_2)d(\rho_{\hbar,\ep}^t)^{\otimes 2}(x_1,x_2) \\
= -\frac{1}{\vep^2}\int_{(\T^d)^N}\int_{(\T^d)^2\setminus\triangle} \paren*{u^t(x)-u^t(y)}\cdot\nabla V(x-y)d\paren*{\frac{1}{N}\sum_{j=1}^N\d_{x_j}}^{\otimes 2}(x,y)d(\rho_{\hbar,\vep}^t)^{\otimes N}(\ux_N) \\
- \frac{1}{N\vep^2}\int_{(\T^d)^2}\paren*{u^t(x_1)-u^t(x_2)}\cdot\nabla V(x_1-x_2) d(\rho_{\hbar,\ep}^t)^{\otimes 2}(x_1,x_2).
\end{multline}
Ultimately, we find that
\begin{multline}\label{eq:rhsH}
\frac{d}{dt}\G_{\hbar,\vep,N}((R_{\hbar,\vep}^t)^{\otimes N}, u^t) = -2\Tr_{\H}\paren*{\sqrt{R_{\hbar,\vep}^t}\paren*{\hbar D-u^t(X)}\Sigma^t\paren*{\hbar D-u^t(X)}\sqrt{R_{\hbar,\vep}^t}}\\
+\frac{1}{\vep^2}\int_{(\T^d)^N}\int_{(\T^d)^2\setminus\triangle} \paren*{u^t(x)-u^t(y)}\cdot\nabla V(x-y)d\paren*{\frac{1}{N}\sum_{j=1}^N\d_{x_j}-1-\vep^2\Uu^t}^{\otimes 2}(x,y)d(\rho_{\hbar,\vep}^t)^{\otimes N}(\ux_N)\\
- 2\int_{(\T^d)^N}\int_{\T^d}\div(-\D)^{-1}(u^t\Uu^t)(x)d\paren*{\frac{1}{N}\sum_{j=1}^N\d_{x_j}-1-\vep^2\Uu^t}(x)d(\rho_{\hbar,\vep}^t)^{\otimes N}(\ux_N)\\
-2\int_{(\T^d)^N}\int_{\T^d}\p_t p^t(x)d\paren*{\frac{1}{N}\sum_{j=1}^N \d_{x_j}-1-\vep^2\Uu^t}(x)d(\rho_{\hbar,\vep}^t)^{\otimes N}(\ux_N) \\
+\frac{1}{N\vep^2}\int_{(\T^d)^2}\paren*{u^t(x_1)-u^t(x_2)}\cdot\nabla V(x_1-x_2) d(\rho_{\hbar,\ep}^t)^{\otimes 2}(x_1,x_2).
\end{multline}

Using the bound
\begin{equation}
\left|\paren*{u^t(x_1)-u^t(x_2)}\cdot \nabla V(x_1-x_2)\right| \lesssim_d \|\nabla u^t\|_{L^\infty}\max\{|x_1-x_2|^{2-d}, C_d\},
\end{equation}
which follows from the mean-value theorem applied to $u^t$ and \eqref{eq:Vasy}, the magnitude of the last term in the right-hand side of \eqref{eq:rhsH} is $\lesssim_d$
\begin{equation}
\frac{\|\nabla u^t\|_{L^\infty}(1+\|\rho_{\hbar,\vep}^t\|_{\dot{H}^{-1}}^2)}{N\vep^2}.
\end{equation}
We estimate the remaining terms in the right-hand side of \eqref{eq:rhsH} exactly as in \cref{ssec:Mpfgron} to conclude that there are constants $C_d,C_{d,\al}>0$ such that
\begin{multline}\label{eq:GrhsH}
\left|\frac{d}{dt}\G_{\hbar,\vep,N}((R_{\hbar,\vep}^t)^{\otimes N}, u^t)\right| \leq  C_d \|\nabla u^t\|_{L^\infty} \Tr_{\H}\paren*{\sqrt{R_{\hbar,\vep}^t}|\hbar D-u^t(X)|^2\sqrt{R_{\hbar,\vep}^t}} \\
+ \frac{C_d\|\nabla u^t\|_{L^\infty}}{\vep^2}\paren*{C_d(1+\vep^2\|\nabla u^t\|_{L^\infty}^2)\paren*{\frac{1+(\log N)\indic_{d=2}}{N^{2/d}}} + \int_{(\T^d)^N} \Fr_N(\ux_N,1+\vep^2\Uu^t)d(\rho_{\hbar,\vep}^t)^{\otimes N}(\ux_N)} \\
+C_{d,\al}\paren*{\vep^2\|u^t\|_{C^{1,\al}}^3+N^{-1/d}}\|u^t\|_{C^{1,\al}}^3 + \frac{C_d(1+\vep^2\|\nabla u^t\|_{L^\infty}^2)(1+(\log N)\indic_{d=2})}{\vep^2 N^{2/d}} \\
+ \frac{1}{\vep^{2}}\int_{(\T^d)^N}\Fr_N(\ux_N,1+\vep^2\Uu^t)d(\rho_{\hbar,\vep}^t)^{\otimes N}(\ux_N) + \frac{C_d\|\nabla u^t\|_{L^\infty}(1+\|\rho_{\hbar,\vep}^t\|_{\dot{H}^{-1}}^2)}{N\vep^2}.
\end{multline}
From the identity \eqref{eq:peidH} together with the definition \eqref{eq:ME1def} of the modulated energy, we see that
\begin{multline}\label{eq:GlimNH}
\lim_{N\rightarrow\infty} \G_{\hbar,\vep,N}((R_{\hbar,\vep}^t)^{\otimes N}, 1+\vep^2\Uu^t) \\
=  \Tr_{\H}\paren*{\sqrt{R_{\hbar,\vep}^t}|\hbar D-u^t(X)|^2\sqrt{R_{\hbar,\vep}^t}} +\frac{1}{\vep^2} \left\|\rho_{\hbar,\vep}^t-1-\vep^2\Uu^t\right\|_{\dot{H}^{-1}}^2 = \G_{\hbar,\vep}(R_{\hbar,\vep}^t,u^t).
\end{multline}
Integrating both sides of inequality \eqref{eq:GrhsH}, using the fundamental theorem of calculus, and finally letting $N\rightarrow\infty$ in the resulting right-hand side, we see that there are constants $C_d,C_{d,\al}>0$ such that 
\begin{multline}
\G_{\hbar,\vep}(R_{\hbar,\vep}^t, u^t) \leq \G_{\hbar,\vep}(R_{\hbar,\vep}^0, u^0) + C_{d,\al}\vep^2\int_0^t \|u^\tau\|_{C^{1,\al}}^6d\tau \\
+ C_d\int_0^t\paren*{1+\|\nabla u^\tau\|_{L^\infty}}\G_{\hbar,\vep}(R_{\hbar,\vep}^\tau,u^\tau)d\tau.
\end{multline}
Now applying the Gronwall-Bellman lemma, we conclude the inequality
\begin{equation}\label{eq:GgronH}
\G_{\hbar,\vep}(R_{\hbar,\vep}^t,u^t) \leq \paren*{\G_{\hbar,\vep}(R_{\hbar,\vep}^0, u^0) + C_{d,\al}\vep^2\int_0^t \|u^\tau\|_{C^{1,\al}}^6d\tau} e^{C_d \int_0^t(1+\|\nabla u^\tau\|_{L^\infty})d\tau}.
\end{equation}

From the estimate \eqref{eq:GgronH}, we deduce the convergence $(\rho_{\hbar,\vep}^t, J_{\hbar,\vep}^t)\rightarrow (1, u^t)$ as $\hbar+\vep\rightarrow 0^+$, in both the strong sense of Sobolev norms and the weak sense of measures, by repeating the arguments in \cref{ssec:Mpfmain}. This last step then completes the proof of \cref{thm:mainH}.

\bibliographystyle{alpha}
\bibliography{PointVortex}
\end{document}